\documentclass[11pt,reqno,oneside]{amsart}

\usepackage{enumerate}
\numberwithin{equation}{section}
\theoremstyle{definition}
\newtheorem{Definition}{Definition}[section]
\newtheorem{Example}[Definition]{Example}
\newtheorem{Remark}[Definition]{Remark}
\newtheorem{Problem}[Definition]{Problem}

\theoremstyle{plain}
\newtheorem{Theorem}[Definition]{Theorem}

\newtheorem{Corollary}[Definition]{Corollary}
\newtheorem{Lemma}[Definition]{Lemma}

\usepackage{geometry}

\usepackage{amssymb}        
\usepackage{mathrsfs}       
\usepackage{eucal}          
\usepackage{stmaryrd}       

\usepackage{hyperref}

\usepackage[usenames,dvipsnames]{xcolor}
\usepackage{tikz}
\usetikzlibrary{arrows, matrix}
\usepackage{tikz-cd}
\usepackage{subfig}         
\usepackage{arydshln}       

\newcommand{\al}{\alpha}
\newcommand{\be}{\beta}
\newcommand{\ga}{\mathrm{\gamma}}

\newcommand{\ep}{\varepsilon}

\newcommand{\la}{\lambda}

\newcommand{\si}{\sigma}


\newcommand{\Z}{\mathbb{Z}}

\newcommand{\K}{\Bbbk}

\newcommand{\Be}{\mathbf{e}}

\newcommand{\Fgl}{\mathfrak{gl}}
\newcommand{\Fsl}{\mathfrak{sl}}

\newcommand{\Fosp}{\mathfrak{osp}}

\newcommand{\Fg}{\mathfrak{g}}


\DeclareMathOperator{\Ann}{Ann}
\DeclareMathOperator{\Aut}{Aut}
\DeclareMathOperator{\Cliff}{Cliff}

\DeclareMathOperator{\Id}{Id}

\DeclareMathOperator{\sgn}{sgn}

\DeclareMathOperator{\Supp}{Supp}

\newcommand{\iv}[2]{\llbracket #1,#2 \rrbracket}

\renewcommand{\tilde}{\widetilde}

\newcommand{\TGWC}[4]{\mathcal{C}_{#1}({#2},{#3},{#4})}
\newcommand{\TGWA}[4]{\mathcal{A}_{#1}({#2},{#3},{#4})}
\newcommand{\TGWI}[4]{\mathcal{I}_{#1}({#2},{#3},{#4})}

\title{Clifford and Weyl superalgebras and spinor representations}

\author{Jonas T. Hartwig\and Vera Serganova}
\date{}

\address{Department of Mathematics, Iowa State University, Ames, IA-50011, USA}
\email{jth@iastate.edu}
\address{Department of Mathematics, University of California, Berkeley, CA-94720, USA}
\email{serganov@math.berkeley.edu}

\subjclass[2010]{17B35; 15A66}
\keywords{Lie superalgebra; spinor representation; generalized Weyl algebra}

\begin{document}
\begin{abstract}
We construct a family of twisted generalized Weyl algebras which includes Weyl-Clifford superalgebras and quotients of the enveloping algebras of $\mathfrak{gl}(m|n)$ and $\mathfrak{osp}(m|2n)$. We give a condition for when a canonical representation by differential operators is faithful. Lastly, we give a description of the graded support of these algebras in terms of pattern-avoiding vector compositions.
\end{abstract}
\maketitle

\section{Introduction}
Twisted generalized Weyl algebras (TGWAs) were introduced by Mazorchuk and Turowska in \cite{MazTur1999,MazTur2002} in an attempt to include a wider range of examples than Bavula's generalized Weyl algebras (GWAs) \cite{Bavula1992}.
 Their structure and representations have been studied in \cite{MazTur1999,MazTur2002,MazPonTur2003,Sergeev2001,Hartwig2006,Hartwig2010,Hartwig2016,HarOin2013,FutHar2012b}.
 Known examples of TGWAs include multiparameter quantized Weyl algebras \cite{MazTur2002,Hartwig2006,FutHar2012b}, the Mickelsson-Zhelobenko step algebras associated to $(\Fgl_{n+1},\Fgl_n\oplus\Fgl_1)$ \cite{MazPonTur2003} and some primitive quotients of enveloping algebras \cite{HarSer2016}.

In this paper we take a step further by proving that supersymmetric analogs of some classical algebras are also examples of TGWAs. Specifically, we show that Weyl-Clifford superalgebras and some quotients of the enveloping algebras of $\mathfrak{gl}(m|n)$ and $\mathfrak{osp}(m|2n)$ can be realized as twisted generalized Weyl (TGW) algebras. This suggests that much of the general representation theory from \cite{MazTur2002,MazPonTur2003,Hartwig2006} could be applied to the study of certain families of superalgebras. In addition our new algebras provide a large supply of consistent but non-regular TGW algebras (i.e. certain elements $t_i$ are zero-divisors). This motivates future development of the theory to include such algebras.

To summarize the contents of the present paper, in Section \ref{sec:tgwas} we recall the definition of TGW algebras from \cite{MazTur2002} which includes certain scalars $\mu_{ij}$ that in our case will be $\pm 1$. Some known results that will be used are also stated.
In Section \ref{sec:clifford-weyl} we prove that the Weyl-Clifford superalgebra from \cite{Nis1990} can be realized as a TGW algebra.

The main object of the paper is introduced in Section \ref{sec:family}, in which we define a family of TGW algebras $\mathcal{A}(\ga)^\pm$ which depend on a certain matrix $\ga$ with integer entries. These algebras naturally come with an algebra homomorphism $\varphi$ from $\mathcal{A}(\ga)^\pm$ to a Clifford-Weyl algebra. This is a generalization of the construction in \cite{HarSer2016}. A sufficient condition for $\varphi_\ga$ to be injective is given in Section \ref{sec:inj}. This condition is related to the graded support of the algebra $\mathcal{A}(\ga)^\pm$ which is combinatorially characterized in Section \ref{sec:support}.

Lastly, these results are applied in Section \ref{sec:Lie-superalgebras} to prove that for appropriate $\ga$, the TGW algebras $\mathcal{A}(\ga)^\pm$ fit into commutative diagrams involving the spinor representation $\pi$ of $U(\Fg)$ for $\Fg=\Fgl(m|n)$ and $\Fosp(m|2n)$ studied by Nishiyama \cite{Nis1990} and Coulembier \cite{Cou2013}. As a corollary we obtain that $U(\Fg)/J$ are examples of TGW algebras for such $\Fg$ as well as for classical Lie algebras. These results generalize previous realizations in \cite{HarSer2016}. We end with some open problems regarding exceptional types.

\subsection*{Notation}
Throughout, we work over an algebraically closed field $\K$ of characteristic zero. Associative algebras are assumed to have a multiplicative identity.
$\iv{a}{b}$ denotes the set of integers $x$ with $a\le x\le b$.

\section{Twisted generalized Weyl algebras} \label{sec:tgwas}
We recall the definition of TGW algebras and some of their useful properties.

\subsection{Definitions} \label{sec:definitions}
Let $I$ be a set.
\begin{Definition}[TGW Datum] 
A \emph{twisted generalized Weyl datum over $\K$ with index set $I$} is a triple $(R,\si,t)$ where
\begin{itemize}
\item $R$ is an associative $\K$-algebra,
\item $\si=(\si_i)_{i\in I}$ a sequence of commuting $\K$-algebra automorphisms of $R$,
\item $t=(t_i)_{i\in I}$ is a sequence of central elements of $R$.
\end{itemize} 
\end{Definition}

Let $\Z I$ denote the free abelian group on $I$, with basis denoted $\{\Be_i\}_{i\in I}$.
For $g=\sum g_i\Be_i\in\Z I$ put $\si_g=\prod \si_i^{g_i}$. Then $g\mapsto\si_g$ defines an action of $\Z I$ on $R$ by $\K$-algebra automorphisms.

\begin{Definition}[TGW Construction]
Let
\begin{itemize}
\item $(R,\si,t)$ be a TGW datum  over $\K$ with index set $I$,
\item $\mu$ be an $I\times I$-matrix
without diagonal, $\mu=(\mu_{ij})_{i\neq j}$,
with $\mu_{ij}\in\K\setminus\{0\}$.
\end{itemize}
The \emph{twisted generalized Weyl construction} associated to $\mu$ and $(R,\si,t)$, denoted $\TGWC{\mu}{R}{\si}{t}$, is defined as the free $R$-ring on the set $\{X_i,Y_i\mid i\in I\}$ modulo the two-sided ideal generated by the following elements:
\begin{subequations}\label{eq:tgwarels}
\begin{alignat}{3}
\label{eq:tgwarels1}
X_ir  &-\si_i(r)X_i,  &\qquad Y_ir&-\si_i^{-1}(r)Y_i, 
 &\qquad \text{$\forall r\in R,\, i\in I$,} \\
\label{eq:tgwarels2}
Y_iX_i&-t_i, &\qquad X_iY_i&-\si_i(t_i),
 &\qquad \text{$\forall i\in I$,} \\
\label{eq:tgwarels3}
&&\qquad X_iY_j&-\mu_{ij}Y_jX_i,
 &\qquad \text{$\forall i,j\in I,\, i\neq j$.}
\end{alignat}
\end{subequations}
\end{Definition}

The algebra $\TGWC{\mu}{R}{\si}{t}$ has a $\Z I$-gradation
given by requiring $\deg X_i=\Be_i, \deg Y_i=-\Be_i, \deg r=0\, \forall r\in R$.
Let $\TGWI{\mu}{R}{\si}{t}\subseteq \TGWC{\mu}{R}{\si}{t}$ be 
the sum of all graded ideals $J\subseteq \TGWC{\mu}{R}{\si}{t}$ 
such that $\TGWC{\mu}{R}{\si}{t}_0\cap J=\{0\}$.
It is easy to see that $\TGWI{\mu}{R}{\si}{t}$ is the unique maximal graded ideal having
zero intersection with the degree zero component.

\begin{Definition}[TGW Algebra]
The \emph{twisted generalized Weyl algebra} $\TGWA{\mu}{R}{\si}{t}$ associated to $\mu$ and $(R,\si,t)$ is
defined as the quotient $\TGWA{\mu}{R}{\si}{t}:=\TGWC{\mu}{R}{\si}{t} / \TGWI{\mu}{R}{\si}{t}$.
\end{Definition}
Since $\TGWI{\mu}{R}{\si}{t}$ is graded,
$\TGWA{\mu}{R}{\si}{t}$ inherits a $\Z I$-gradation from $\TGWC{\mu}{R}{\si}{t}$.
The images in $\TGWA{\mu}{R}{\si}{t}$ of the elements $X_i, Y_i$ will also be denoted by $X_i, Y_i$.

\begin{Example}\label{ex:WeylAlgebraExample}
For an index set $I$, the $I$:th Weyl algebra over $\K$, $A_I=A_I(\K)$ is the $\K$-algebra generated by $\{x_i,\partial_i\mid i\in I\}$ subject to defining relations
\[[x_i,x_j]=[\partial_i,\partial_j]=[\partial_i,x_j]-\delta_{ij}=0,\quad\forall i,j\in I.\]
There is a $\K$-algebra isomorphism $\TGWA{\mu}{R}{\tau}{u}\to A_n$ where $\mu_{ij}=1$ for all $i\neq j$, $R=\K[u_i\mid i\in I]$, $\tau_i(u_j)=u_j-\delta_{ij}$, given by $X_i\mapsto x_i$, $Y_i\mapsto \partial_i$, $u_i\mapsto \partial_ix_i$.
\end{Example}

\subsection{Regularity and consistency}
\label{sec:regcon}

\begin{Definition}[Reduced and monic monomials]
A \emph{monic monomial} in a TGW algebra is any finite product of elements from the set $\{X_i\}_{i\in I}\cup\{Y_i\}_{i\in I}$. A \emph{reduced monomial} is an elements of the form
$Y_{i_1}\cdots Y_{i_k} X_{j_1}\cdots X_{j_l}$
where $\{i_1,\ldots,i_k\}\cap \{j_1,\ldots,j_l\}=\emptyset$.
\end{Definition}

\begin{Lemma}{\cite[Lem.~3.2]{Hartwig2006}}
\label{lem:monomials}
$\TGWA{\mu}{R}{\si}{t}$ is generated as a left (and as a right) $R$-module by the reduced monomials.
\end{Lemma}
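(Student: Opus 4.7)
The plan is to show every element of $\TGWA{\mu}{R}{\si}{t}$ lies in the left $R$-span of reduced monomials; the right $R$-module statement follows by a symmetric argument using \eqref{eq:tgwarels1} to commute $R$-factors to the right instead. Since $\TGWA{\mu}{R}{\si}{t}$ is generated by $R \cup \{X_i, Y_i\}_{i \in I}$, every element is a $\K$-linear combination of products of these generators. Using \eqref{eq:tgwarels1} iteratively, each $R$-factor appearing in such a product can be commuted to the far left, so every element is a left $R$-linear combination of monic monomials $M = Z_1 \cdots Z_n$ with each $Z_k \in \{X_i, Y_i\}_{i\in I}$. It therefore suffices to prove that every monic monomial lies in the left $R$-span of reduced monomials.

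I proceed by strong induction on $n$. The cases $n \leq 1$ are immediate. For $n \geq 2$, apply the following rewriting rules iteratively: (i) if $M$ has an adjacent pair $X_iY_i$ or $Y_iX_i$ with matching index, replace it via \eqref{eq:tgwarels2} by $\si_i(t_i)$ or $t_i$, shortening the length by two so that the inductive hypothesis applies; (ii) otherwise, if $M$ has an adjacent $X_iY_j$ with $i \neq j$, swap it via \eqref{eq:tgwarels3} to $\mu_{ij}Y_jX_i$, strictly decreasing the inversion count $\mathrm{inv}(M) := \#\{(p,q) : p < q,\, Z_p \text{ an } X,\, Z_q \text{ a } Y\}$. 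Since $(n, \mathrm{inv})$ strictly decreases in lex order at each step, this terminates; if rule (i) never triggers, $M$ becomes $c \cdot r \cdot M_0$ for some $c \in \K$, $r \in R$, with $M_0 = Y_{i_1}\cdots Y_{i_k} X_{j_1}\cdots X_{j_l}$ satisfying $i_k \neq j_1$.

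If $\{i_1,\ldots,i_k\} \cap \{j_1,\ldots,j_l\} = \emptyset$, then $M_0$ is reduced and we are done. Otherwise, fix a common index $p$ and choose $s$ maximal with $i_s = p$ and $t$ minimal with $j_t = p$; then $i_r \neq p$ for $r > s$ and $j_\tau \neq p$ for $\tau < t$. Using \eqref{eq:tgwarels3}, slide $X_{j_1}, X_{j_2}, \ldots, X_{j_{t-1}}$ leftward in turn past $Y_{i_{s+1}}, \ldots, Y_{i_k}$. If a same-index collision $Y_{j_\tau} X_{j_\tau}$ is encountered during some slide, apply \eqref{eq:tgwarels2} to reduce the length and invoke induction; otherwise the slides complete and we reach the interleaving $Y_{i_1}\cdots Y_{i_{s-1}} X_{j_1}\cdots X_{j_{t-1}} Y_{i_s} X_{j_t} Y_{i_{s+1}}\cdots Y_{i_k} X_{j_{t+1}}\cdots X_{j_l}$, in which $Y_{i_s}X_{j_t} = Y_p X_p$ is adjacent, and one more application of \eqref{eq:tgwarels2} shortens $M_0$ by two. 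Induction then completes the proof.

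The main obstacle is this last step, because \eqref{eq:tgwarels3} permits neither $X$-$X$ nor $Y$-$Y$ nor same-index $X$-$Y$ swaps, so not every rearrangement of letters is achievable. The extremal choice of $(s,t)$ is precisely what ensures the required slides never try to cross a matching $Y_p$ or $X_p$; any incidental same-index collision along the way only helps, by triggering an earlier length reduction.
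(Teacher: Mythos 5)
Your proof is correct and takes essentially the same route as the source the paper cites for this lemma (the paper gives no proof of its own, only the reference to \cite{Hartwig2006}): push $R$-factors to the left with \eqref{eq:tgwarels1}, then straighten monic monomials by induction on length, using \eqref{eq:tgwarels3} to reorder and \eqref{eq:tgwarels2} to contract matching adjacent pairs. The only cosmetic slip is that your displayed interleaving also requires moving $Y_{i_{s+1}},\dots,Y_{i_k}$ rightward past $X_{j_t}=X_p$ (legal since $i_r\neq p$ for $r>s$), not merely the leftward slides of $X_{j_1},\dots,X_{j_{t-1}}$ that you describe; since the same extremal choice of $(s,t)$ justifies these extra swaps, the argument stands.
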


Since a TGW algebra $\TGWA{\mu}{R}{\si}{t}$ is a quotient of an $R$-ring, it is an $R$-ring itself with a natural map $\rho:R\to \TGWA{\mu}{R}{\si}{t}$. By Lemma \ref{lem:monomials}, the degree zero component of $\TGWA{\mu}{R}{\si}{t}$ (with respect to the $\Z I$-gradation) is equal to the image of $\rho$.

\begin{Definition}[Regularity]
A TGW datum $(R,\si,t)$ is called \emph{regular} if $t_i$
is regular (i.e. not a zero-divisor) in $R$ for all $i$.
\end{Definition}

Due to Relation \eqref{eq:tgwarels2}, the canonical map $R\to\TGWC{\mu}{R}{\si}{t}$ is not guaranteed to be injective, and indeed sometimes it is not \cite{FutHar2012b}. It is injective if and only if the map $R\to\TGWA{\mu}{R}{\si}{t}$ is injective. 

\begin{Definition}[$\mu$-Consistency]
A TGW datum $(R,\si,t)$ is \emph{$\mu$-consistent} if the canonical map $\rho:R\to \TGWA{\mu}{R}{\si}{t}$ is injective.
\end{Definition}

Abusing language we say that a TGW algebra $\TGWA{\mu}{R}{\si}{t}$ is regular (respectively consistent) if $(R,\si,t)$ is regular (respectively $\mu$-consistent).

\begin{Theorem}[{\cite{FutHar2012b}}]\label{thm:consistency}
A regular TGW algebra $\TGWA{\mu}{R}{\si}{t}$ is consistent iff
\begin{subequations}\label{eq:consistency_rels}
\begin{align}\label{eq:consistency_rel1}
\si_i\si_j(t_it_j)&=\mu_{ij}\mu_{ji}\si_i(t_i)\si_j(t_j),\quad\forall i\neq j;\\
\si_i\si_k(t_j)t_j&=\si_i(t_j)\si_k(t_j),\quad \forall i\neq j\neq k\neq i.
\end{align}
\end{subequations}
\end{Theorem}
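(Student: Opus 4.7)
The proof splits into two directions.

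\emph{Necessity.} Suppose $\mathcal{A}:=\TGWA{\mu}{R}{\si}{t}$ is $\mu$-consistent so that $\rho:R\to\mathcal{A}$ is injective. Then any two reductions of a fixed monic monomial to $\rho(R)$ using only \eqref{eq:tgwarels} must yield equal elements of $R$. Apply this to the degree-zero monomial $X_iY_jX_jY_i$ ($i\neq j$): one reduction contracts $Y_jX_j=t_j$ and $X_iY_i=\si_i(t_i)$ in place, producing $\si_i(t_j)\si_i(t_i)$; another first applies $X_iY_j=\mu_{ij}Y_jX_i$ and $X_jY_i=\mu_{ji}Y_iX_j$ and then contracts, producing $\mu_{ij}\mu_{ji}\si_j^{-1}\si_i(t_i)t_j$. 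Applying $\si_j$ and using commutativity of $\{\si_k\}$ yields the first relation of \eqref{eq:consistency_rels}. For the second relation, evaluate the degree-zero six-letter monomial $X_iX_kY_jX_jY_iY_k$ ($i,j,k$ pairwise distinct) in two analogous ways: one route contracts $Y_jX_j$ first and absorbs the resulting $t_j$ against the outer $X_iX_k$ sandwich; the other rewrites via the $\mu$-commutations $X_kY_j=\mu_{kj}Y_jX_k$, $X_jY_i=\mu_{ji}Y_iX_j$, etc., before contracting. Matching the two outcomes, then invoking the first relation of \eqref{eq:consistency_rels} to rewrite $\mu_{ij}\mu_{ji}\si_j^{-1}\si_i(t_i)$ and regularity of $\si_i(t_i)\si_k(t_k)$ to cancel a common factor, isolates the second relation.

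\emph{Sufficiency.} Assume regularity and \eqref{eq:consistency_rels}. The plan is to construct a $\Z I$-graded $R$-module $M=\bigoplus_{g\in\Z I}Re_g$ realizing the relations \eqref{eq:tgwarels} via operators of the form $\tilde X_i(re_g)=\si_i(r)\gamma_i(g)e_{g+\Be_i}$ and $\tilde Y_i(re_g)=\si_i^{-1}(r)\delta_i(g)e_{g-\Be_i}$, with $R$ acting faithfully on $Re_0\subseteq M$. Imposing \eqref{eq:tgwarels2} and \eqref{eq:tgwarels3} translates into a system of equations in $R$ on the scalars $\gamma_i(g),\delta_i(g)$, which amounts to a cocycle condition indexed by loops in the Cayley graph of $\Z I$. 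The square-loop at a pair $\{i,j\}$ closes exactly by the first relation of \eqref{eq:consistency_rels}, and the elementary loop involving three distinct indices $\{i,j,k\}$ closes exactly by the second. Regularity guarantees that the resulting scalars are nonzerodivisors, so the operators preserve the $R$-lattice $M$. The induced map from $\TGWC{\mu}{R}{\si}{t}$ to $\End_{\K}(M)$ is injective on $\rho(R)$, and since its kernel is a graded ideal with zero intersection with the degree-zero component it is contained in $\TGWI{\mu}{R}{\si}{t}$; thus $\rho:R\to\mathcal{A}$ is injective.

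\emph{Main obstacle.} The harder direction is sufficiency, and within it the crux is constructing the module $M$ and verifying that its cocycle $\gamma_i,\delta_i$ is well-defined. Since the scalars are determined recursively along paths in $\Z I$, independence of path must be checked, and that independence reduces precisely to the two- and three-index loop conditions. The three-index case is the most delicate: it genuinely requires regularity in order to cancel factors of the form $\si_i(t_i)\si_k(t_k)$ that appear when two different rewriting orders are compared.
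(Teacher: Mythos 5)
First, note that the paper itself gives no proof of this theorem: it is imported verbatim from \cite{FutHar2012b}, so the only meaningful comparison is with the proof in that reference, whose overall architecture (direct computation with degree-zero monomials for necessity; construction of a graded faithful module governed by cocycle identities for sufficiency) your proposal correctly mirrors. Your necessity argument is sound and essentially complete: evaluating $X_iY_jX_jY_i$ two ways does give $\si_i(t_j)\si_i(t_i)=\mu_{ij}\mu_{ji}\si_j^{-1}\si_i(t_i)t_j$ and hence \eqref{eq:consistency_rel1} after applying $\si_j$ (no regularity needed there), and the six-letter monomial $X_iX_kY_jX_jY_iY_k$ does yield the second relation once you use \eqref{eq:consistency_rel1} twice and cancel the regular elements $\si_i(t_i)$, $\si_k(t_k)$; one small point you gloss over is that only a single factor of $t_j$ appears on one side, so you must multiply through by an extra $t_j$ (and later cancel it, again by regularity) in order to invoke \eqref{eq:consistency_rel1} for both pairs $(i,j)$ and $(k,j)$.

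The genuine gap is in sufficiency, which you yourself identify as the crux but do not carry out. You never exhibit the scalars $\gamma_i(g),\delta_i(g)$ (note that the naive choice $\gamma_i\equiv 1$, $\delta_i\equiv t_i$ satisfies \eqref{eq:tgwarels1}--\eqref{eq:tgwarels2} but fails \eqref{eq:tgwarels3}, so a genuinely nontrivial system must be solved), and, more importantly, you assert without proof that path-independence of the recursively defined scalars over \emph{arbitrary} loops in $\Z I$ reduces to the square loop at $\{i,j\}$ and the three-index loop at $\{i,j,k\}$. That reduction --- showing that the identities $\si_g\si_h(t_it_j)$-type compatibilities for all $g,h\in\Z I$ follow from the two stated elementary relations --- is the actual mathematical content of the hard direction and requires a careful induction on $|g|$; it occupies the bulk of the argument in \cite{FutHar2012b}. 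As written, your sufficiency direction is a correct plan rather than a proof. (The concluding logic is fine: once such a graded module $M=\bigoplus_g R e_g$ exists, $R$ acts faithfully on $e_0$, the kernel of $\TGWC{\mu}{R}{\si}{t}\to\End_\K(M)$ is graded with zero degree-zero part, and consistency follows.)
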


That relation \eqref{eq:consistency_rel1} is necessary for consistency of a regular TGW datum was known already in \cite{MazTur1999,MazTur2002}. If $(R,\si,t)$ is not regular, sufficient and necessary conditions for $\mu$-consistency are not known (see Problem \ref{prb:consistency}). In this paper we produce many examples of consistent but non-regular TGW algebras.

Conversely, for consistent TGW algebras one can characterize regularity as follows:

\begin{Theorem}{\cite[Thm.~4.3]{HarOin2013}}
\label{thm:regularity}
Let $A=\TGWA{\mu}{R}{\si}{t}$ be a consistent TGW algebra. Then the following are equivalent
\begin{enumerate}[{\rm (i)}]
\item $(R,\si,t)$ is regular;
\item Each monic monomial in $A$ is non-zero and generates a free left (and right) $R$-module of rank one;
\item $A$ is \emph{regularly graded}, i.e. for all $g\in\Z I$, there exists a nonzero regular element in $A_g$;
\item If $a\in A$ is a homogeneous element such that $bac=0$ for some monic monomials $b,c\in A$, then $a=0$.
\end{enumerate}
\end{Theorem}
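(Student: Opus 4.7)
The plan is to establish the cycle (i)~$\Rightarrow$~(ii)~$\Rightarrow$~(iii)~$\Rightarrow$~(i) together with (ii)~$\Rightarrow$~(iv)~$\Rightarrow$~(i); throughout I use that $\mu$-consistency gives $R \hookrightarrow A$ and $A_0 = \rho(R)$.

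For (i)~$\Rightarrow$~(ii), I would induct on the length of a monic monomial $M$ of degree $g$ to put it in normal form $M = s_M \cdot m_g$, where $m_g$ is the canonical reduced monomial of degree $g$ and $s_M \in R$ is a product of $\si$-translates of the elements $t_j$. The inductive step rewrites via the relations \eqref{eq:tgwarels}: replacing internal $X_jY_j$ by $\si_j(t_j)$, swapping $X_jY_k$ or $Y_kX_j$ (for $j\neq k$) at the cost of $\mu_{jk}^{\pm 1}$, and pulling elements of $R$ to the left via $X_jr = \si_j(r)X_j$ and $Y_jr = \si_j^{-1}(r)Y_j$. Regularity of $(R,\si,t)$ makes $s_M$ a non-zero-divisor. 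Combined with Lemma \ref{lem:monomials} and the $R$-linear independence of reduced monomials in a consistent TGW algebra, $rM = rs_M m_g = 0$ forces $rs_M = 0$ and hence $r = 0$, so $M$ is nonzero and generates a free rank-one left $R$-module (and symmetrically on the right).

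The implication (ii)~$\Rightarrow$~(iii) is immediate, since a monic monomial of degree $g$ is a nonzero homogeneous element of $A_g$ whose freeness as a rank-one $R$-module is precisely regularity on the relevant side. For (ii)~$\Rightarrow$~(iv), given $bac = 0$ with $b \in A_h$, $c \in A_k$ monic and $a \in A_g$, I write $a = rm_g$ using $A_g = Rm_g$ and push $r$ through $b$ via $br = \si_h(r)b$ to get $bac = \si_h(r)(bm_gc)$; since $bm_gc$ is itself a monic monomial, (ii) forces $\si_h(r) = 0$, so $a = 0$. For (iv)~$\Rightarrow$~(i), if $st_i = 0$ in $R$ with $s$ nonzero, then $s\cdot Y_iX_i = st_i = 0$ in $A$, and (iv) applied with $a=s$, $b=1$, $c=Y_iX_i$ forces $s = 0$ in $A$ and hence in $R$ by $\mu$-consistency, a contradiction.

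For (iii)~$\Rightarrow$~(i), fix $i \in I$ and pick regular $u \in A_{\Be_i}$ and $v \in A_{-\Be_i}$, which by Lemma \ref{lem:monomials} take the form $u = X_i\alpha$, $v = \beta Y_i$ with $\alpha,\beta \in R$. Then
\[
vu = \beta Y_iX_i\alpha = \beta t_i\alpha = (\beta\alpha)t_i \in R
\]
by centrality of $t_i$. Setting $\gamma = \beta\alpha$, the product $vu = \gamma t_i$ is a non-zero-divisor in $A$ (from $r\cdot vu = 0$ one deduces $(rv)u = 0$, then $rv = 0$ by regularity of $u$, then $r = 0$ by regularity of $v$), and hence also in $R$. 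Any $s \in R$ with $st_i = 0$ then satisfies $s\cdot\gamma t_i = st_i\gamma = 0$ by centrality of $t_i$, forcing $s = 0$; thus $t_i$ is regular. The principal technical obstacle lies in the normal-form step of (i)~$\Rightarrow$~(ii): well-definedness of $s_M$ (its independence of the particular rewriting sequence chosen) is precisely what the consistency relations \eqref{eq:consistency_rels} of Theorem \ref{thm:consistency} guarantee, and the $R$-linear independence of reduced monomials in $\TGWA{\mu}{R}{\si}{t}$ is what the passage from $\TGWC{\mu}{R}{\si}{t}$ through the maximal graded ideal $\TGWI{\mu}{R}{\si}{t}$ is designed to provide.
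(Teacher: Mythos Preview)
First, note that the paper does not supply its own proof of this theorem: it is quoted from \cite[Thm.~4.3]{HarOin2013} as background. So there is no in-paper argument to compare against, and I can only assess your proposal on its own terms.

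There are two genuine gaps. In (i)$\Rightarrow$(ii) you invoke ``the $R$-linear independence of reduced monomials in a consistent TGW algebra'' to pass from $rs_M m_g = 0$ to $rs_M = 0$. No such blanket independence holds (e.g.\ $X_1X_2$ and $X_2X_1$ are distinct reduced monomials of the same degree), and the specific fact you actually need---that a fixed reduced monomial $m_g$ has trivial left $R$-annihilator---is not a consequence of consistency alone; it is precisely the reduced-monomial case of (ii), which is what you are proving. The missing step is to multiply on the right by a monic monomial $W$ of degree $-g$ (say the ``reverse'' of $m_g$) so that $m_g W \in R$ is a product of $\si$-translates of the central regular elements $t_j$, hence itself regular; then $r m_g = 0$ yields $r\cdot(m_g W)=0$ and so $r=0$.

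In (ii)$\Rightarrow$(iv) you write $a = r m_g$, tacitly assuming $A_g = R m_g$. Statement (ii) only says each monic monomial generates a free rank-one \emph{sub}module of $A_g$, not that $A_g$ is cyclic over $R$, and these are not obviously equivalent. A clean route is to observe that (ii)$\Rightarrow$(i) is immediate (apply (ii) to the monic monomial $Y_iX_i = t_i$) and then prove (i)$\Rightarrow$(iv) directly: choose monic $b', c'$ with $b'b,\, cc' \in R$ products of $\si$-translates of the $t_j$ (hence regular), so that $0 = b'(bac)c' = (b'b)\,\si_g(cc')\,a$; for any $a'\in A_{-g}$ this gives $\si_{-g}\big((b'b)\si_g(cc')\big)\cdot (a'a)=0$ in $R$, forcing $a'a=0$, and non-degeneracy of the gradation form (Theorem~\ref{thm:nondeg}) then yields $a=0$. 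Your arguments for (iii)$\Rightarrow$(i) and (iv)$\Rightarrow$(i) are correct as written.
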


\subsection{Non-degeneracy of the gradation form}
For a group $G$, any $G$-graded ring $A=\bigoplus_{g\in G}A_g$
can be equipped with a $\Z$-bilinear form $\ga:A\times A\to A_e$ called the \emph{gradation form}, defined by
\begin{equation}
\ga(a,b)=\mathfrak{p}_e(ab)
\end{equation}
where $\mathfrak{p}_e$ is the projection $A\to A_e$ along the direct sum $\bigoplus_{g\in G} A_g$, and $e\in G$ is the neutral element.

\begin{Theorem}[{\cite[Cor.~3.3]{HarOin2013}}]
\label{thm:nondeg}
The ideal $\TGWI{\mu}{R}{\si}{t}$ is equal to the radical of the gradation form $\ga$ of $\TGWC{\mu}{R}{\si}{t}$ (with respect to the $\Z I$-gradation), and thus the gradation form on $\TGWA{\mu}{R}{\si}{t}$ is non-degenerate.
\end{Theorem}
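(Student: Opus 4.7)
My plan is to prove $\TGWI{\mu}{R}{\si}{t} = \mathrm{rad}(\ga)$ by mutual inclusion; non-degeneracy of the induced form on $\TGWA{\mu}{R}{\si}{t}$ then follows by passing to the quotient. For the easy inclusion $\TGWI{\mu}{R}{\si}{t} \subseteq \mathrm{rad}(\ga)$, pick a homogeneous $a \in \TGWI{\mu}{R}{\si}{t}$ (legitimate since the ideal is graded). For any $b\in \TGWC{\mu}{R}{\si}{t}$ the products $ab,ba$ lie in $\TGWI{\mu}{R}{\si}{t}$, whose intersection with $\TGWC{\mu}{R}{\si}{t}_0$ is zero by definition; hence $\ga(a,b)=\mathfrak{p}_e(ab)=0$ and $\ga(b,a)=0$, giving $a\in \mathrm{rad}(\ga)$.

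For the reverse inclusion, my plan is to show that $\mathrm{rad}(\ga)$ is a graded two-sided ideal with $\mathrm{rad}(\ga)\cap \TGWC{\mu}{R}{\si}{t}_0=0$; the characterization of $\TGWI{\mu}{R}{\si}{t}$ as the maximal such graded ideal will then force $\mathrm{rad}(\ga)\subseteq \TGWI{\mu}{R}{\si}{t}$. Gradedness follows from the graded nature of $\ga$: since $\ga(\TGWC{\mu}{R}{\si}{t}_g,\TGWC{\mu}{R}{\si}{t}_h)=0$ whenever $g+h\ne e$, for $a=\sum_g a_g\in \mathrm{rad}(\ga)$ and any homogeneous $b_h$ one has $\ga(a,b_h)=\ga(a_{-h},b_h)=0$, so each $a_g$ lies in $\mathrm{rad}(\ga)$. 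The intersection with the degree-zero component is trivial since $r\in \TGWC{\mu}{R}{\si}{t}_0\cap \mathrm{rad}(\ga)$ implies $r=\mathfrak{p}_e(r\cdot 1)=\ga(r,1)=0$.

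The main obstacle is the two-sided ideal property. The associativity $\ga(ab,c)=\mathfrak{p}_e(abc)=\ga(a,bc)$ already makes the left radical $L$ a right ideal and the right radical $R$ a left ideal, so $\mathrm{rad}(\ga)=L\cap R$ is at least a sub-$(\TGWC{\mu}{R}{\si}{t},\TGWC{\mu}{R}{\si}{t})$-bimodule. Upgrading to a two-sided ideal is where the specific TGW structure, as opposed to a generic graded algebra, becomes essential: I expect to use relations \eqref{eq:tgwarels1}--\eqref{eq:tgwarels3} to rewrite a homogeneous product $cab$ of total degree $e$ in a normal form exposing a factor of the shape $ab'$ with $\deg b'=-\deg a$, so that the radical hypothesis on $a$ forces vanishing, and similarly on the right. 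Assembling the three properties gives $\mathrm{rad}(\ga)=\TGWI{\mu}{R}{\si}{t}$, and the induced form on $\TGWA{\mu}{R}{\si}{t}=\TGWC{\mu}{R}{\si}{t}/\TGWI{\mu}{R}{\si}{t}$ is therefore non-degenerate.
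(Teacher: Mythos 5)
First, a remark on context: the paper does not prove this statement; it is quoted from \cite[Cor.~3.3]{HarOin2013}, so your attempt can only be measured against that reference. Your overall architecture is the standard (and surely the intended) one, and three of its four ingredients are correct and complete: the inclusion $\TGWI{\mu}{R}{\si}{t}\subseteq\mathrm{rad}(\ga)$ (using gradedness of $\TGWI{\mu}{R}{\si}{t}$ to place $\mathfrak{p}_e(ab)$ in $\TGWI{\mu}{R}{\si}{t}\cap\TGWC{\mu}{R}{\si}{t}_0=0$), the gradedness of $\mathrm{rad}(\ga)$, and $\mathrm{rad}(\ga)\cap\TGWC{\mu}{R}{\si}{t}_0=0$.

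The gap is exactly where you flag it, but it is a genuine gap, and your proposed mechanism does not close it. Two issues. (1) Your claim that $\mathrm{rad}(\ga)=L\cap R$ is ``at least a sub-bimodule'' does not follow from $L$ being a right ideal and $R$ a left ideal: for $a\in L\cap R$ you get $ac\in L$ and $ca\in R$, but neither $ac\in R$ nor $ca\in L$. Moreover, for a general $G$-graded ring the radical of the gradation form need \emph{not} be an ideal: in the path algebra of the oriented $3$-cycle $1\xrightarrow{\al}2\xrightarrow{\de}3\xrightarrow{\be}1$ with $\deg\al=\deg\de=1$, $\deg\be=-2$, modulo the two cycles $\be\de\al$ and $\al\be\de$, the arrow $\al$ lies in the radical while $\de\al$ does not, since $\de\al\be\neq 0$. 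So the TGW relations must genuinely enter, and the step cannot be waved through. (2) Your plan of rewriting $cab$ ``in a normal form exposing a factor of the shape $ab'$'' cannot be executed literally: relations \eqref{eq:tgwarels} let a generator move past $R$ and past $Y_j$ with $j\neq i$, but not past $X_j$ or $Y_i$, and $a$ is an arbitrary homogeneous element, not a monomial, so a left factor $c$ cannot migrate to the right of $a$. What actually works is to reduce $c$ to a single generator and, say for $c=X_i$, to show $X_i w=0$ where $w=\mathfrak{p}_{-\Be_i}(ab)$, knowing only $wX_i=\mathfrak{p}_e(a\cdot bX_i)=0$ from $a\in L$. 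This conversion of right-annihilation into left-annihilation uses the special structure $\TGWC{\mu}{R}{\si}{t}_{-\Be_i}=\rho(R)Y_i$ together with the pair $Y_iX_i=t_i$, $X_iY_i=\si_i(t_i)$ (writing $w=\rho(s)Y_i$ gives $wX_i=\rho(st_i)$ versus $X_iw=\rho(\si_i(st_i))$), and even then one must justify that $\ker\rho$ is $\si_i$-stable when $\rho$ is not injective. Until this step is supplied, the containment $\mathrm{rad}(\ga)\subseteq\TGWI{\mu}{R}{\si}{t}$, and hence the theorem, remains unproved.
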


\subsection{$R$-rings with involution}

\begin{Definition}\label{dfn:Rring_with_involution} 
\begin{enumerate}[{\rm (i)}]
\item An \emph{involution} on a ring $A$ is a $\Z$-linear map $\ast:A\to A, a\mapsto a^\ast$ satisfying $(ab)^\ast=b^\ast a^\ast$, $(a^\ast)^\ast=a$ for all $a,b\in A$.
\item Let $R$ be a ring.
An \emph{$R$-ring with involution} is a ring $A$ equipped with a ring homomorphism $h_A:R\to A$ and an involution $\ast:A\to A$ such that $h(r)^\ast=h(r)$ for all $r\in R$.
\item 
If $A$ and $B$ are two $R$-rings with involution, then a \emph{map of $R$-rings with involution} is a ring homomorphism $k:A\to B$ such that $k\circ h_A=h_B$ and $k(a^\ast)=(k(a))^\ast$ for all $a\in A$.
\end{enumerate}
\end{Definition}

Any TGW algebra $A=\TGWA{\mu}{R}{\si}{t}$ for which $\mu_{ij}=\mu_{ji}$ for all $i,j$, can be equipped with an involution $\ast$ given by $X_i^\ast=Y_i,\, Y_i^\ast=X_i\;\forall i\in I$, $r^\ast=r\;\forall r\in R$. Together with the canonical map $\rho:R\to A$ this turns $A$ into an $R$-ring with involution. In particular we regard the Weyl algebra $A_I$ as an $R$-ring with involution in this way, where $R=\K[u_i\mid i\in I]$ as in Example \ref{ex:WeylAlgebraExample}.

\section{The Clifford/Weyl superalgebras} \label{sec:clifford-weyl}

In this section let $\pm\in\{+,-\}$ and put $\mp=-\pm$.
Let $p$ and $q$ be non-negative integers and put $n=p+q$.
We consider supersymmetric analogs $A_{p|q}^\pm$ of Clifford and Weyl algebras and prove that they can be presented as TGW algebras.

\subsection{Definition and properties} \label{sec:clifford-weyl-definition}

\begin{Definition}
The \emph{Clifford/Weyl superalgebra of degree $p|q$}, denoted $A_{p|q}^\pm$, is defined as the superalgebra with even generators $x_i, \partial_i\; (i\in \iv{1}{p})$ and odd generators $x_i, \partial_i\; (i\in \iv{p+1}{n})$ and relations
\begin{equation}
[\partial_i,x_j]_\pm-\delta_{ij}=[x_i,x_j]_\pm=[\partial_i,\partial_j]_\pm=0\qquad\text{for all $i,j\in \iv{1}{n}$,}
\end{equation}
where $[\cdot,\cdot]_\pm$ denotes the super(anti-)commutator
\begin{equation}
[a,b]_\pm = ab\pm (-1)^{p(a)p(b)}ba.
\end{equation}
\end{Definition}
Thus $A_{p|q}^+$ (respectively $A_{p|q}^-$) is a supersymmetric analog of the Clifford (resepctively Weyl) algebra. 

We will need the following result, which is straightforward to verify.

\begin{Lemma}\label{lem:RpqMaxComm}
The subalgebra $R$ of $A_{p|q}^\pm$ generated by $\{\partial_i x_i\mid i\in \iv{1}{n}\}$ is maximal commutative.
\end{Lemma}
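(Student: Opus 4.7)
The plan is to introduce a $\Z^n$-grading on $A_{p|q}^\pm$ by $\deg x_i = \Be_i$ and $\deg \partial_i = -\Be_i$ (all defining relations are homogeneous for this grading), and to show that the degree-zero component $A_0$ coincides with both $R$ and with the centralizer $Z_A(R)$ of $R$ in $A := A_{p|q}^\pm$. A PBW-type argument, combining the pairwise (anti)commutation of generators for distinct indices with the standard monomial basis of a single Weyl or Clifford pair, yields a basis of $A_{p|q}^\pm$ consisting of ordered monomials $x_1^{a_1}\cdots x_n^{a_n}\partial_1^{b_1}\cdots\partial_n^{b_n}$, with $a_i,b_i\in\N$ for Weyl pairs and $a_i,b_i\in\{0,1\}$ for Clifford pairs. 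In particular $A_0$ is spanned by those monomials with $a_i=b_i$ for every $i$.

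Writing $u_i := \partial_i x_i$, each $u_i$ is even, so $\ad u_i$ is a derivation of $A$. A direct calculation from the defining relations gives $[u_i,x_j]=\epsilon_i\delta_{ij}x_j$ and $[u_i,\partial_j]=-\epsilon_i\delta_{ij}\partial_j$, where $\epsilon_i=+1$ when $(x_i,\partial_i)$ is a Weyl pair and $\epsilon_i=-1$ when it is a Clifford pair. Consequently each homogeneous component $A_\alpha$ is an eigenspace of $\ad u_i$ with eigenvalue $\epsilon_i\alpha_i$. This already forces the $u_i$ to commute pairwise (each $u_j$ lies in $A_0$), so $R$ is commutative; and if $a=\sum_\alpha a_\alpha$ centralizes every $u_i$, then $\epsilon_i\alpha_i a_\alpha=0$ for all $i$, forcing $a\in A_0$. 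Hence $Z_A(R)\subseteq A_0$.

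To close the loop, I would show $A_0\subseteq R$. Every basis monomial of $A_0$ equals, up to a sign, the product $\prod_i x_i^{a_i}\partial_i^{a_i}$; the reordering produces only signs, because each factor $x_i^{a_i}\partial_i^{a_i}$ is even and centralizes $x_j$ and $\partial_j$ for $j\ne i$ (the competing signs coming from the $A^\pm$ relations appear with even multiplicity and cancel). For each $i$, the one-variable identity $x_i^k\partial_i^k=(u_i-1)(u_i-2)\cdots(u_i-k)$ in the Weyl case, and $x_i\partial_i=1-u_i$ in the Clifford case, exhibits $x_i^{a_i}\partial_i^{a_i}$ as a polynomial in $u_i$, so the full product lies in $R$. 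Combining the two inclusions gives $R=A_0=Z_A(R)$, which is the maximal commutativity statement. The main bookkeeping obstacle is sign-tracking in $A^+$, where odd generators commute rather than anticommute and odd $x_i,\partial_j$ anticommute for $i\ne j$; however, in every step above the anomalous signs appear in pairs and cancel, so the argument proceeds uniformly for both $\pm$.
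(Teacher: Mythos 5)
Your argument is correct and complete. The paper itself gives no proof of this lemma (it is dismissed as ``straightforward to verify''), so there is nothing to compare against; your route --- showing $R=(A_{p|q}^\pm)_0=Z_A(R)$ by reading off the eigenvalues $\epsilon_i\alpha_i$ of $\ad(\partial_ix_i)$ on the $\Z^n$-graded components and then expressing each degree-zero ordered monomial $\prod_i x_i^{a_i}\partial_i^{a_i}$ as a polynomial in the $u_i$ --- is the natural one and dovetails with the paper's own observation in Section 3.2 that $\iota(R)$ coincides with the degree-zero subalgebra. One cosmetic remark: in the inclusion $A_0\subseteq R$ you need not verify that the reordering signs cancel to $+1$; any nonzero scalar multiple of $\prod_i x_i^{a_i}\partial_i^{a_i}$ already lies in $R$, so the sign-bookkeeping you flag as the ``main obstacle'' is actually immaterial.
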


By the defining relations, $A_{p|q}^\pm$ is a graded algebra with respect to the free abelian group $\Z^n$. In addition $A_{p|q}^\pm$ has an involution $\ast$ given by $x_i^\ast=\partial_i$, $\partial_i^\ast=x_i$. Since $(\partial_i x_i)^\ast=\partial_i x_i$, $A_{p|q}^\pm$ is an $R$-ring with involution. Even though $A_{p|q}^\pm$ is not a domain in general, the following graded regularity property still holds.

\begin{Lemma}\label{lem:aaast1}
Let $a\in A_{p|q}^\pm$ be homogeneous of degree $g\in \Z^n$. If $a^\ast \cdot a=0$ then $a=0$.
\end{Lemma}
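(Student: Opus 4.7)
My plan is to reduce the lemma to the Weyl algebra by exhibiting a tensor decomposition $A_{p|q}^\pm \cong W \otimes C$ into a Weyl factor $W$ and a Clifford factor $C$ that respects the $\Z^n$-grading and the involution, and then exploiting that Weyl algebras are domains and Clifford algebras are matrix algebras.

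In the $-$ case, the even and odd generators of $A_{p|q}^-$ already commute pairwise, so $A_{p|q}^- \cong A_p \otimes \mathrm{Cl}_{2q}$ directly. In the $+$ case, the even and odd generators anticommute and an untwisting is required: setting $\omega = \prod_{i \le p}(1 - 2 x_i \partial_i)$, one checks (using $\partial_i x_i + x_i \partial_i = 1$, $(x_i \partial_i)^2 = x_i \partial_i$, and the anticommutation of distinct even generators) that $\omega^2 = 1$, $\omega^* = \omega$, that $\omega$ has degree $0$, and that $\omega$ anticommutes with each $x_i, \partial_i$ for $i \le p$ while commuting with each $x_j, \partial_j$ for $j > p$. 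Replacing the odd generators by $\tilde x_j = x_j \omega$, $\tilde \partial_j = \omega \partial_j$ yields a Weyl subalgebra $A_q$ commuting with the Clifford subalgebra $\mathrm{Cl}_{2p}$; the involution and $\Z^n$-grading transfer so that $A_{p|q}^+ \cong \mathrm{Cl}_{2p} \otimes A_q$ as $\Z^n$-graded $\K$-algebras with involution.

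Now, for $a \in A_{p|q}^\pm$ homogeneous of degree $g = g_W + g_C$, write $a = \sum_\epsilon \alpha_\epsilon \otimes \beta_\epsilon$ where $\{\beta_\epsilon\}$ is a basis of $C_{g_C}$ consisting of matrix units under the identification $\mathrm{Cl}_{2k} \cong M_{2^k}(\K)$; the involution $\ast$ restricted to $C$ corresponds to matrix transposition. A direct calculation of products of matrix units gives $\beta_\epsilon^* \beta_{\epsilon'} = \delta_{\epsilon \epsilon'}\, \gamma_\epsilon$ with the $\gamma_\epsilon$ pairwise distinct matrix units, hence linearly independent in $C$. Therefore
\[
a^* a = \sum_{\epsilon, \epsilon'} \alpha_\epsilon^* \alpha_{\epsilon'} \otimes \beta_\epsilon^* \beta_{\epsilon'} = \sum_\epsilon \alpha_\epsilon^* \alpha_\epsilon \otimes \gamma_\epsilon.
\]
Vanishing of this expression forces $\alpha_\epsilon^* \alpha_\epsilon = 0$ in $W$ for every $\epsilon$; since $W$ is a domain and $\ast$ is a bijection on $W$, this gives $\alpha_\epsilon = 0$ for every $\epsilon$, and hence $a = 0$.

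The main technical obstacle is the untwisting step for $A_{p|q}^+$ --- specifically, verifying the commutation/anticommutation properties of $\omega$ and that it is fixed by $\ast$ and of degree zero --- after which the rest reduces to matrix unit combinatorics in a Clifford algebra and the domain property of the Weyl algebra.
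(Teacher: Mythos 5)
Your proof is correct, and while it rests on the same structural fact as the paper's --- a factorization of $A_{p|q}^\pm$ into a Weyl factor (a domain) tensored with a Clifford factor (a matrix algebra) --- the execution is genuinely different and in one respect more complete. The paper writes a homogeneous $a$ of degree $g$ as $a=rx^{(g)}$ with $r$ in the commutative degree-zero subalgebra $R$, computes $a\cdot a^\ast=r^2cb$, cancels the regular even factor $b$, and then uses the isomorphism $A_{p|q}^-\simeq A_{p|0}^-\otimes M_{2^q}(\K)$ to conclude $r^2=0$ and finally $r=0$ from the fact that $R$ is a finite product of polynomial rings; the $+$ case is dismissed as ``analogous.'' You instead expand $a=\sum_\epsilon\alpha_\epsilon\otimes\beta_\epsilon$ over the homogeneous matrix units of the fixed degree $g_C$ of the Clifford factor, use that $\beta_\epsilon^\ast\beta_{\epsilon'}=\delta_{\epsilon\epsilon'}\gamma_\epsilon$ with the $\gamma_\epsilon$ distinct diagonal matrix units (this works because, for fixed degree, the column set of a matrix unit $E_{S,T}$ determines its row set), and reduce to $\alpha_\epsilon^\ast\alpha_\epsilon=0$ in the Weyl algebra, which is a domain. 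Your explicit untwisting element $\omega=\prod_{i\le p}(1-2x_i\partial_i)$ for the $+$ case is a real contribution: there the even and odd generators anticommute, so the decomposition is a priori only a supertensor product (a point the paper itself acknowledges later, in the subsection comparing $A^+_{p|q}$ with $A^-_{q|p}$), and your $\omega$ converts it into an honest tensor product compatible with the grading and the involution. The only step you leave implicit is that the multiplication map $C\otimes W\to A_{p|q}^\pm$ is bijective and not merely surjective, but this follows from the standard double-centralizer argument since the Clifford factor is a full matrix algebra, so it is not a gap.
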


\begin{proof} We give a proof for $A=A_{p|q}^-$, the other case being analogous.
Write $a=rx^{(g)}$ where $r\in R$ and $x^{(g)}=x_1^{(g_1)}\cdots x_n^{(g_n)}$ where for $s>0$,  $x_i^{(s)}=x_i^s$, $x_i^{(-s)}=\partial_i^s$. By reordering the indices, we may assume that the first $k$ elements of the tuple $(g_{p+1},\ldots,g_{n})$ are zero, and the rest are nonzero. Put $u_i=\partial_i x_i$. For $i>p$ we have $u_i x_i=\partial_ix_i^2=0$ and $u_i\partial_i=\partial_ix_i\partial_i=(1-x_i\partial_i)\partial_i=\partial_i$. Thus we may assume that $r$ lies in the subalgebra of $R$ generated by $\{u_1,\ldots, u_{p+k}\}$.
If $a\cdot a^\ast=0$ then we have 
\begin{equation}\label{eq:lem_step}
0=a\cdot a^\ast = r x^{(g)}x^{(-g)} r = r^2cb
\end{equation}
 where
\[b=x_1^{(g_1)}\cdots x_p^{(g_p)} \cdot x_p^{(-g_p)}\cdots x_1^{(-g_1)}\]
which can be written as a polynomial in $u_i$, $i\le p$, and
\[c=x_{p+k+1}^{(g_{p+k+1})}\cdots x_n^{(g_n)} \cdot x_n^{(-g_n)}\cdots x_{p+k+1}^{(-g_{p+k+1})}\]
which can be written as a polynomial in $u_i$, $i>p$.
Since $b$ is regular in $A$, \eqref{eq:lem_step} implies $r^2c=0$.
We have the following isomorphisms of algebras
\[A\simeq A_{p|0}^-\otimes_\K A_{0|q}^-\simeq A_{p|0}^-\otimes_\K M_{2^q}(\K)\simeq M_{2^k}( A_{p|0}^-)\otimes_\K M_{2^{q-k}}(\K).\]
Under this isomorphism, $r^2c$ is mapped to $r^2\otimes c$. That this is zero implies $r^2=0$. But $R$ is isomorphic to $(\K[u_i\mid i\in\iv{1}{p}])^{2^q}$ which is a direct product of domains, hence $r=0$. This proves $a=0$.
\end{proof}

\subsection{Realization as TGW algebras} \label{sec:clifford-weyl-as-tgwa}

To realize $A_{p|q}^\pm$ as TGW algebras, consider the commutative $\K$-algebra
\begin{equation}\label{eq:Rpqdef}
R_{p|q}^\pm:=\K\big[u_i\mid i\in \iv{1}{n}\big]/\big( u_i^2-u_i\mid (-1)^{p(i)}=\pm 1\big).
\end{equation}
There is an injective homomorphism
\begin{equation} \label{eq:iotadef}
\begin{aligned}
\iota: R_{p|q}^\pm &\longrightarrow A_{p|q}^\pm,\\
u_i &\longmapsto \partial_ix_i.
\end{aligned}
\end{equation}
We will often use $\iota$ to identify $R_{p|q}^\pm$ with its image in $A_{p|q}^\pm$. One checks that
the image of $\iota$ coincides with the degree zero subalgebra, $(A_{p|q}^\pm)_0$, of $A_{p|q}^\pm$ with respect to the $\Z^n$-gradation
$A_{p|q}^\pm=\bigoplus_{d\in\Z^n} (A_{p|q}^\pm)_d$ given by $\deg(x_i)=\Be_i$,
$\deg(\partial_i)=-\Be_i,\;\forall i\in \iv{1}{n}$. 

For $i,j\in \iv{1}{n}$, put 
\begin{equation}\label{eq:laij}
\la_{ij}=\mp (-1)^{p(i)p(j)}
\end{equation}
and for $i\in \iv{1}{n}$, define $\tau_i\in\Aut_\K(R_{p|q}^\pm)$ by
\begin{equation}\label{eq:tau-definition}
\tau_i(u_j)=\begin{cases} \la_{ii}(u_i-1),&\text{if $i=j$,}\\ u_j,&\text{otherwise.}\end{cases}
\end{equation}
One checks that $\tau_i$ preserves the relations $u_j^2-u_j=0$ for $j$ with $(-1)^{p(j)}=\pm 1$.
Let $\tau=(\tau_i)_{i=1}^n$ and $u=(u_i)_{i=1}^n$.
Let $\TGWA{\la}{R_{p|q}^\pm}{\tau}{u}$ be the corresponding TGW algebra.

\begin{Theorem}\label{thm:WeylClifford}
There is an isomorphism of $\K$-algebras
\begin{equation}
\begin{aligned}
\chi:\TGWA{\la}{R_{p|q}^\pm}{\tau}{u} &\overset{\sim}{\longrightarrow}   A_{p|q}^\pm, \\
X_i &\longmapsto x_i,\\
Y_i &\longmapsto \partial_i.
\end{aligned}
\end{equation}
In particular, $\TGWA{\la}{R_I^\pm}{\tau}{u}$ is consistent (i.e. the natural map $\rho:R_{p|q}^\pm\to \TGWA{\la}{R_{p|q}^\pm}{\tau}{u}$  is injective).
\end{Theorem}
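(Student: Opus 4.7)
My plan is to construct $\chi$ first as a map out of the free TGW construction, then show it descends to an isomorphism on the TGW algebra using Lemma \ref{lem:aaast1} in the crucial step.

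First I would verify that the assignments $X_i\mapsto x_i$, $Y_i\mapsto \partial_i$, and $\rho(u_i)\mapsto \iota(u_i)$ determine an algebra homomorphism $\tilde\chi:\TGWC{\la}{R_{p|q}^\pm}{\tau}{u}\to A_{p|q}^\pm$ by checking the three defining relation families \eqref{eq:tgwarels1}--\eqref{eq:tgwarels3}. All reduce to direct super-(anti)commutator manipulations in $A_{p|q}^\pm$: the relation $[\partial_i,x_i]_\pm=1$ combined with the identity $\la_{ii}=\mp(-1)^{p(i)}$ gives $x_i\partial_i=\la_{ii}(u_i-1)=\iota(\tau_i(u_i))$; the vanishing $[\partial_j,x_i]_\pm=0$ for $i\neq j$ yields $x_i\partial_j=\la_{ij}\partial_j x_i$; and combining these with $[x_i,x_j]_\pm=0$ yields $x_i u_j=\tau_i(u_j)x_i$ on the generators $u_j$ of $R_{p|q}^\pm$. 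The mirror identities for $Y_i$ follow by applying the involution $\ast$ (well-defined on the construction since $\la_{ij}=\la_{ji}$) or by a symmetric direct calculation.

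The map $\tilde\chi$ respects the $\Z^n$-gradation, so its kernel $K$ is a graded ideal of $\TGWC{\la}{R_{p|q}^\pm}{\tau}{u}$. Because every degree-zero monomial in $\{X_i,Y_i\}$ reduces via \eqref{eq:tgwarels3} and \eqref{eq:tgwarels2} to an element of $\rho(R_{p|q}^\pm)$, the degree-zero component of the construction is $\rho(R_{p|q}^\pm)$, and since $\tilde\chi\circ\rho=\iota$ is injective we have $K\cap\TGWC{\la}{R_{p|q}^\pm}{\tau}{u}_0=0$. By the maximality of $\TGWI{\la}{R_{p|q}^\pm}{\tau}{u}$ among graded ideals with trivial intersection with the degree-zero component, we obtain $K\subseteq\TGWI{\la}{R_{p|q}^\pm}{\tau}{u}$, so $\tilde\chi$ descends to a surjection $\chi:\TGWA{\la}{R_{p|q}^\pm}{\tau}{u}\to A_{p|q}^\pm$.

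The main step is the reverse containment $\TGWI{\la}{R_{p|q}^\pm}{\tau}{u}\subseteq K$, equivalently the maximality of $K$ itself. The crucial input is Lemma \ref{lem:aaast1}. Given any graded ideal $J\supsetneq K$, pick a homogeneous $a\in J\setminus K$. Then $\tilde\chi(a)$ is a nonzero homogeneous element of $A_{p|q}^\pm$, so by Lemma \ref{lem:aaast1}, $\tilde\chi(a^\ast a)=\tilde\chi(a)^\ast\tilde\chi(a)\neq 0$, hence $a^\ast a\notin K$. But $a^\ast a\in J$ (since $J$ is two-sided and $a\in J$) and has degree zero, producing a nonzero element of $J\cap\TGWC{\la}{R_{p|q}^\pm}{\tau}{u}_0$ and contradicting the hypothesis. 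Thus $K=\TGWI{\la}{R_{p|q}^\pm}{\tau}{u}$, $\chi$ is an isomorphism, and the consistency assertion is immediate since $\chi\circ\rho=\iota$ is injective. The whole argument hinges on Lemma \ref{lem:aaast1}, which was prepared in Section \ref{sec:clifford-weyl-definition} precisely for this application; everything else is essentially bookkeeping around the maximality characterization of $\TGWI{\la}{R_{p|q}^\pm}{\tau}{u}$.
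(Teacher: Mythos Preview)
Your argument is sound in substance but contains one logical slip in the ordering. From $K\subseteq\TGWI{\la}{R_{p|q}^\pm}{\tau}{u}$ alone you cannot conclude that $\tilde\chi$ descends to $\mathcal{A}=\mathcal{C}/\mathcal{I}$; descent requires the \emph{reverse} containment $\mathcal{I}\subseteq K$, which is exactly what you establish in the following paragraph via Lemma~\ref{lem:aaast1}. Reorganize so that both containments are proved first, conclude $K=\mathcal{I}$, and then $\chi$ is immediately a well-defined isomorphism (surjectivity being clear). The phrase ``contradicting the hypothesis'' is also slightly off: your argument actually shows that \emph{every} graded ideal $J\supsetneq K$ meets $\mathcal{C}_0$ nontrivially, which is precisely the maximality of $K$.

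With that fix, your route differs from the paper's in a useful way. The paper obtains $\mathcal{I}\subseteq K$ by asserting (without detailed proof) that the supercommutators $[X_i,X_j]$ and $[Y_i,Y_j]$ generate $\mathcal{I}$ and observing they map to zero in $A_{p|q}^\pm$; it then proves injectivity of $\chi$ separately on the quotient $\mathcal{A}$, using non-degeneracy of the gradation form (Theorem~\ref{thm:nondeg}) to force any nonzero graded ideal of $\mathcal{A}$ to meet $R$. You instead show directly that $K$ is the maximal graded ideal with trivial degree-zero part, with Lemma~\ref{lem:aaast1} doing the work; this handles descent and injectivity in one stroke and sidesteps the explicit-generators claim entirely. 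In the paper Lemma~\ref{lem:aaast1} is actually reserved for the later Theorem~\ref{thm:sufficient} rather than used here, so your deployment of it is an alternative, but a clean and self-contained one.
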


\begin{proof}
Put $R=R_{p|q}^\pm$ and $A=A_{p|q}^\pm$. The identities $x_i(\partial_ix_i)=(x_i\partial_i)x_i$, $x_i\partial_j=\la_{ij}\partial_jx_i$ for $i\neq j$,
and $x_i\partial_i=\la_{ii}(\partial_ix_i+1)$ imply that relations \eqref{eq:tgwarels} are preserved. 
Thus we have a map $\TGWC{\la}{R}{\tau}{u}\to A$ of $R$-rings given by $X_i\mapsto x_i$, $Y_i\mapsto \partial_i$. 
Furthermore, for each $i,j\in \iv{1}{n}$, it can be checked, using Lemma \ref{lem:monomials} that $[X_i,X_j]$ and $[Y_i,Y_j]$ lie in the radical of the gradation form on $\TGWC{\la}{R}{\tau}{u}$. In fact these elements generate the radical. Hence they generate the ideal $\TGWI{\la}{R}{\tau}{u}$ by Theorem \ref{thm:nondeg}. 
Since $[x_i,x_j]=[\partial_i,\partial_j]=0$ in $A$ this shows that we have a well-defined map $\chi:\TGWA{\la}{R}{\tau}{u}\to A$ of $R$-rings given by $X_i\mapsto x_i$, $Y_i\mapsto \partial_i$.
Since $x_i$ and $\partial_i$ generate $A$, the map $\chi$ is surjective. It remains to prove it is injective.
Since $\chi$ is a map of $R$-rings, the following diagram is commutative:
\begin{equation}\label{eq:WeylClifford-proof}
\begin{aligned}
\begin{tikzcd}[ampersand replacement=\&]
\TGWA{\la}{R}{\tau}{u} 
 \arrow{r}{\chi}  \& A \\ 
  R
 \arrow[hookrightarrow]{ur}[swap]{\iota}
 \arrow{u}{\rho}  \&
\end{tikzcd}
\end{aligned}
\end{equation}
Since $\iota$ is injective, $\rho$ is injective. That is, $\TGWA{\la}{R}{\tau}{u}$ is consistent. 
Identifying $R$ with the images under $\rho$ and $\iota$, the map $\chi|_R$ is the identity map.
Both $\TGWA{\la}{R}{\tau}{u}$ and $A$ are $\Z^n$-graded algebras and $\chi$ is a graded homomorphism. Therefore $J=\ker \chi$ is a graded ideal of $\TGWA{\la}{R}{\tau}{u}$.
If $J\neq 0$ then, since $\TGWA{\la}{R}{\tau}{u}$ is a consistent TGW algebra, $J\cap R\neq 0$.
However that contradicts that $\chi|_R$ is injective. Hence $J=0$ which completes the proof that $\chi$ is an isomorphism.
\end{proof}

\begin{Remark}
When $q>0$, Theorem \ref{thm:WeylClifford} implies that $A_{p|q}^-$ is a consistent TGW algebra which is not regularly graded. Indeed, if $j>p$, then $u_j$ is not regular in $R$ because $u_j(u_j+1)=0$. Thus, by Theorem \ref{thm:regularity}, $\TGWA{\la}{R}{\tau}{u}$ is not regularly graded. Note that for non-regularly graded TGW algebras, it is not known if relations \eqref{eq:consistency_rels} are sufficient (or even necessary) for it to be consistent.
\end{Remark}

\begin{Remark}
The algebra $A_{0|q}^-$ is finite-dimensional (an even Clifford algebra). Hence Theorem \ref{thm:WeylClifford} shows that TGW algebras can be finite-dimensional.
\end{Remark}

\begin{Remark}
Theorem \ref{thm:WeylClifford} suggests that the class of TGW algebras already contains not only quantum deformations of many algebras  (see e.g. \cite[Ex.~2.2.3]{MazTur2002}), but also supersymmetric analogues of certain algebras, without modifying the definition of TGW algebras.
\end{Remark}

\section{A new family of TGW algebras $\mathcal{A}(\ga)^\pm$}
\label{sec:family}

In this section we define a family of TGW algebras that depend on a matrix. This construction is a supersymmetric generalization of the one in \cite{HarSer2016}.

\subsection{Construction via monomial maps}

In this section we use the Clifford/Weyl superalgebras $A_{p|q}^\pm$ to construct new TGW algebras denoted $\mathcal{A}(\ga)^\pm$. Our method is to look for maps
\[\varphi:\mathcal{A}_\mu(R,\si,t)\to A_{p|q}^\pm \]
of $R$-rings with involution. Here $R=R_{p|q}^\pm$.
The motivation is at threefold. First it generalizes the construction from \cite{HarSer2016} which correseponds to the case $A_{p|0}^-$. Second, the TGW algebras obtained in this way automatically come with $\varphi$, which may be thought of as a representation by differential operators. Thirdly we show in Section \ref{sec:Lie-superalgebras} that certain quotients of enveloping algebras of Lie superalgebras are TGW algebras of exactly of this form.

As in \cite{HarSer2016} we restrict attention to monomial embeddings
\begin{equation}\label{eq:phi}
\varphi(X_i)=x_1^{(\ga_{1i})}x_2^{(\ga_{2i})}\cdots x_n^{(\ga_{ni})}.
\end{equation}
Here $n=p+q$, $\ga_{ji}\in\Z$ and we use the notation
\begin{equation}
x_j^{(k)} = \begin{cases}
 x_j^k,&k\ge 0,\\
 \partial_j^{-k}, &k<0.
\end{cases}
\end{equation}
In the case of \cite{HarSer2016} under mild assumptions on $\varphi$ the form \eqref{eq:phi} was in fact shown to be necessary. Here in our more general setting we shall be content with showing how the assumption that $\varphi$ is a homomorphism of $R$-rings with involution such that \eqref{eq:phi} holds naturally gives rise to conditions on $\ga_{ji}$ and also specifies the TGW datum (automorphisms $\si_i$, elements $t_i\in R$ and scalars $\mu_{ij}$).

First, since $\varphi$ is supposed to be a map of rings with involution, we necessarily have
\begin{equation}
\varphi(Y_i)=\varphi(X_i^\ast)=\varphi(X_i)^\ast=x_n^{(-\ga_{ni})}\cdots x_2^{(-\ga_{2i})} x_1^{(-\ga_{1i})}.
\end{equation}
Second, since $\varphi$ is a map of $R$-rings and $t_i\in R$ we have
\begin{multline}
t_i
=\varphi(t_i)
=\varphi(Y_iX_i)
=\varphi(Y_i)\varphi(X_i)
=x_n^{(-\ga_{ni})}\cdots x_2^{(-\ga_{2i})} x_1^{(-\ga_{1i})} \cdot x_1^{(\ga_{1i})}x_2^{(\ga_{2i})}\cdots x_n^{(\ga_{ni})}\\
=x_1^{(-\ga_{1i})}x_1^{(\ga_{1i})}\cdot x_2^{(-\ga_{2i})}x_2^{(\ga_{2i})}\cdots x_n^{(-\ga_{ni})}x_n^{(\ga_{ni})}
\end{multline}
In the last step we used the TGW algebra realization of $A_{p|q}^\pm$ which in particular has $\tau_i(u_j)=u_j$ for $j\neq i$.
To obtain an explicit formula for $t_i$ we compute $x_j^{(-\ga_{ji})}x_j^{(\ga_{ji})}$.
If $\ga_{ji}>0$ we have
\begin{equation}
x_j^{(-\ga_{ji})}x_j^{(\ga_{ji})}=\partial_j^{\ga_{ji}} x_j^{\ga_{ji}}=
\tau_j^{(-\ga_{ji}+1)}(u_j)\cdots \tau_j^{-1}(u_j)u_j
\end{equation}
Here we see that this is zero if $\la_{jj}=-1$ and $\ga_{ji}>1$ because then $\tau_j^{-1}(u_j)u_j = \tau_j^{-1}( u_j\la_{jj}(u_j-1))=0$ due to $u_j^2=u_j$ in $R$.
To avoid this scenario (having $t_i=0$ in a TGW algebra leads to degenerate behaviour such as $X_i=Y_i=0$) we make our first assumption on $\ga_{ji}$:
\begin{equation} \label{eq:gamma-assumption-1}
|\ga_{ji}|\le 1\quad\text{for all $i,j$ such that $\la_{jj}=-1$}
\end{equation}
Under this assumption we can proceed and obtain the formula
\begin{equation}
x_j^{(-\ga_{ji})}x_j^{(\ga_{ji})}=
(u_j+\ga_{ji}-1)\cdots(u_j+1)u_j
\end{equation}
We used that $\tau_j(u_j)=\la_{jj}(u_j-1)$, so the formula is clear when $\la_{jj}=1$ while if $\la_{jj}=-1$ there is at most one factor (empty product is interpreted as $1$.)
The case $\ga_{ji}<0$ is handled analogously (which is why we put absolute value in \eqref{eq:gamma-assumption-1}).

The final formula for $t_i$ is
\begin{equation}\label{eq:ti-formula}
\begin{gathered}
t_i=u_{1i}u_{2i}\cdots u_{ni} \\
u_{ji} = \begin{cases}
 (u_j+\ga_{ji}-1)\cdots (u_j+1)u_j,& \ga_{ji}>0,\\
 1,&\ga_{ji}=0,\\
 (u_j-|\ga_{ji}|)\cdots (u_j-2)(u_j-1),&\ga_{ji}<0.
 \end{cases}
\end{gathered}
\end{equation}
Similarly $\si_i$ can be deduced as follows. We have
\[\varphi(X_iu_j)=\varphi(\si_i(u_j)X_i).\]
Since $\varphi$ is a homomorphism of $R$-rings, we have
\[\varphi(X_i)u_j=\si_i(u_j) \varphi(X_i).\]
Substituting \eqref{eq:phi} we immediately obtain the sufficient condition
\begin{equation}\label{eq:sigma-from-gamma}
\si_i = \tau_1^{\ga_{1i}}\tau_2^{\ga_{2i}}\cdots\tau_n^{\ga_{ni}}
\end{equation}
What remains is to ensure that for $i\neq j$,
\[X_iY_j = \mu_{ij} Y_jX_i\]
holds for appropriate scalars $\mu_{ij}$, under suitable assumptions on $\ga_{kl}$.
We have
\[\varphi(X_i)\varphi(Y_j)=x_1^{(\ga_{1i})}\cdots x_n^{(\ga_{ni})}\cdot x_n^{(-\ga_{nj})}\cdots x_1^{(-\ga_{1j})}\]
First we observe that if there exists $k\in\{1,2,\ldots,n\}$ with $\la_{kk}=-1$ and $\ga_{ki}\ga_{kj}<0$ then $\varphi(X_i)\varphi(Y_j)=0=\varphi(Y_j)\varphi(X_i)$. 
If no such $k$ exists we want to move all factors on the right $x_l^{(-\ga_{lj})}$ to the left of all factors $x_k^{(\ga_{ki})}$. The only problem is when $k=l$. A natural assumption for it to be possible is that actually $\ga_{ki}\ga_{kj}\le 0$, because then the two factors are either both powers of $x_k$ or both powers of $\partial_k$.

To summarize, we make the following second assumption on $\ga_{ji}$:
\begin{equation}
\forall i\neq j: \text{Either $\ga_{ki}\ga_{kj}<0$ for some $k$ with $\la_{kk}=-1$, or $\ga_{ki}\ga_{kj}\le 0$ for all $k$.}
\end{equation}
Under this assumption we then have for all $k,l$:
\begin{equation}
x_k^{(\ga_{ki})}x_l^{(\ga_{lj})} = \la_{kl}^{\ga_{ki}\ga_{lj}} x_l^{(\ga_{lj})} x_k^{(\ga_{ki})}.
\end{equation}
Thus we finally obtain that
\[\varphi(X_i)\varphi(Y_j)=\mu_{ij}\varphi(Y_j)\varphi(X_i)\]
holds, provided
\begin{equation}
\mu_{ij} = \prod_{1\le k,l\le n} \la_{kl}^{\ga_{ki}\ga_{lj}}
\end{equation}
Using that $\la_{kl}=(\mp 1)(-1)^{p(k)p(l)}$ this can be written
\begin{equation}\label{eq:mu-from-gamma}
\mu_{ij}=\mu_{ij}^\pm = (\mp 1)^{p'(i)p'(j)} \cdot (-1)^{p(i)p(j)}
\end{equation}
where the parities are defined by
\begin{align}
\label{eq:p-parity}
p(i) &=\sum_{k=1}^n \bar \ga_{ki} p(k) \\
\label{eq:p-prime-parity}
p'(i)&=\sum_{k=1}^n \bar \ga_{ki}
\end{align}
($\bar x\in \Z/2\Z$ is the image of $x\in \Z$ under the canonical projection.)

Note that \eqref{eq:p-parity} expresses that the matrix $\ga$, when regarded as a $\Z$-module map $\Z^m\to \Z^n$, is an even map, with respect to the parity $p(a_1,\ldots,a_n)=\sum_k \bar a_k p(k)$.

\begin{Theorem}\label{thm:sufficient}
Let $p,q,m$ be non-negative integers, put $n=p+q$. 
Let $\ga=(\ga_{ji})$ be a $n\times m$-matrix with integer entries satisfying the following two conditions:
\begin{enumerate}[{\rm (i)}]
\item $|\ga_{ji}|\le 1$ whenever $\la_{ii}=-1$,
\item $\forall i\neq j$: either $\ga_{ki}\ga_{kj}<0$ for some $k$ with $\la_{kk}=-1$, or $\ga_{ki}\ga_{kj}\le 0$ for all $k$.
\end{enumerate}
Then there exist a TGW algebra $\mathcal{A}(\ga)^\pm=\mathcal{A}_{\mu}(R,\si,t)$ of rank $m$, and a homomorphism of $R$-rings with involution 
\begin{equation} \label{eq:varphi-definition-pm}
\varphi:\mathcal{A}_{\mu}(R,\si,t)\to A_{p|q}^\pm.
\end{equation}
The homomorphism is uniquely determined by the condition
\begin{equation}
\varphi(X_i)=x_1^{(\ga_{1i})}x_2^{(\ga_{2i})}\cdots x_n^{(\ga_{ni})}
\end{equation}
and the TGW algebra is given by the following data:
\begin{equation} \label{eq:Rpq-explicit-definition}
R=R_{p|q}^\pm=\K[u_1,\ldots,u_n]/(u_i^2-u_i\mid \la_{ii}=-1)
\end{equation}
where $\la_{ij}=\mp (-1)^{p(i)p(j)}$ and $t=(t_1,\ldots,t_m)$ where
\begin{equation}\label{eq:ti-formula2}
\begin{gathered}
t_i=u_{1i}u_{2i}\cdots u_{ni} \\
u_{ji} = \begin{cases}
 (u_j+\ga_{ji}-1)\cdots (u_j+1)u_j,& \ga_{ji}>0,\\
 1,&\ga_{ji}=0,\\
 (u_j-|\ga_{ji}|)\cdots (u_j-2)(u_j-1),&\ga_{ji}<0.
 \end{cases}
\end{gathered}
\end{equation}
Lastly, $\si=(\si_1,\ldots,\si_m)$ where
\begin{equation}\label{eq:sigma-from-gamma2}
\si_i = \tau_1^{\ga_{1i}}\tau_2^{\ga_{2i}}\cdots\tau_n^{\ga_{ni}}
\end{equation}
where
\begin{equation}\label{eq:tau-definition-again}
\tau_i(u_j)=\begin{cases} \la_{ii}(u_i-1),&\text{if $i=j$,}\\ u_j,&\text{otherwise.}\end{cases}
\end{equation}
and $\mu=(\mu_{ij})_{1\le i,j\le m}$ where
\begin{equation}
\mu_{ij}=(\mp 1)^{p'(i)p'(j)} \cdot (-1)^{p(i)p(j)}
\end{equation}
where $p(i)$ and $p'(i)$ were defined in \eqref{eq:p-parity}-\eqref{eq:p-prime-parity}.
\end{Theorem}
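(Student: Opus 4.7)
The plan is to organize the derivation sketched before the theorem into three clean stages, with the first two being essentially routine verifications and the third being the main subtle step.

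Stage one constructs the TGW datum and shows that $\mathcal{A}(\ga)^\pm := \TGWA{\mu}{R}{\si}{t}$ is well-defined. The only nontrivial check is that each $\tau_i$ preserves the defining ideal of $R = R_{p|q}^\pm$, i.e.\ the relations $u_j^2-u_j$ for those $j$ with $\la_{jj}=-1$. This is automatic when $j\neq i$ and follows when $j=i$ from the fact that $u_i\mapsto 1-u_i$ is an involution of $\K[u_i]/(u_i^2-u_i)$. The $\tau_i$ commute pairwise (they affect distinct generators), hence so do the $\si_i$; centrality of $t_i$ is immediate in the commutative ring $R$, and each $\mu_{ij}\in\{\pm 1\}$ is invertible.

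Stage two verifies that the assignment $X_i\mapsto x_1^{(\ga_{1i})}\cdots x_n^{(\ga_{ni})}$, $Y_i\mapsto x_n^{(-\ga_{ni})}\cdots x_1^{(-\ga_{1i})}$, $r\mapsto \iota(r)$ determines a homomorphism $\tilde\varphi: \TGWC{\mu}{R}{\si}{t}\to A_{p|q}^\pm$. This amounts to checking the three families of defining relations \eqref{eq:tgwarels}, and the computations are essentially those sketched before the theorem: \eqref{eq:tgwarels1} follows from the identity $x_k^{(\ga_{ki})} u_j = \tau_k^{\ga_{ki}}(u_j)\,x_k^{(\ga_{ki})}$ in $A_{p|q}^\pm$, so that conjugation by $\tilde\varphi(X_i)$ implements $\si_i$; \eqref{eq:tgwarels2} reduces to an explicit evaluation of $x_j^{(-\ga_{ji})}x_j^{(\ga_{ji})}$ in which condition~(i) is exactly the hypothesis needed to avoid the collapse $u_j(u_j-1)=0$ when $\la_{jj}=-1$; and \eqref{eq:tgwarels3} requires condition~(ii) to reorder $\tilde\varphi(X_i)\tilde\varphi(Y_j)$ without hitting square-zero obstructions, each elementary transposition contributing the factor $\la_{kl}^{-\ga_{ki}\ga_{lj}}$, whose total product is $\mu_{ij}$.

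Stage three is the main obstacle: showing that $\tilde\varphi$ descends to $\TGWA{\mu}{R}{\si}{t} = \TGWC{\mu}{R}{\si}{t}/\TGWI{\mu}{R}{\si}{t}$, i.e.\ that $\TGWI{\mu}{R}{\si}{t}\subseteq \ker\tilde\varphi$. The cleanest argument exploits the involution. Since the formula for $\mu_{ij}$ is symmetric in $i$ and $j$, the assignment $X_i^\ast = Y_i$, $r^\ast = r$ defines an involution $\ast$ on $\TGWC{\mu}{R}{\si}{t}$, and $\tilde\varphi$ is $\ast$-compatible because $(x_1^{(\ga_{1i})}\cdots x_n^{(\ga_{ni})})^\ast = x_n^{(-\ga_{ni})}\cdots x_1^{(-\ga_{1i})}$ in $A_{p|q}^\pm$. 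The ideal $\TGWI{\mu}{R}{\si}{t}$ is $\ast$-stable: its image under $\ast$ is a graded two-sided ideal with zero intersection in degree zero, so by maximality both inclusions hold. Thus for any homogeneous $c\in \TGWI{\mu}{R}{\si}{t}$, the element $c^\ast c$ lies in $\TGWI{\mu}{R}{\si}{t}\cap \TGWC{\mu}{R}{\si}{t}_0 = \{0\}$, giving $c^\ast c = 0$ in $\TGWC{\mu}{R}{\si}{t}$. Applying $\tilde\varphi$ yields $\tilde\varphi(c)^\ast\tilde\varphi(c)=0$ in $A_{p|q}^\pm$, and since $\tilde\varphi(c)$ is $\Z^n$-homogeneous of degree $\ga(\deg c)$, Lemma~\ref{lem:aaast1} gives $\tilde\varphi(c)=0$. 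Uniqueness of $\varphi$ is then immediate since $\mathcal{A}(\ga)^\pm$ is generated by $R\cup\{X_i\}\cup\{Y_i\}$.
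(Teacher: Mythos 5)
Your proposal is correct and follows essentially the same route as the paper: the first two stages are the computations carried out in the discussion preceding the theorem, and your third stage is exactly the paper's argument — for homogeneous $c$ in the maximal graded ideal, $c^\ast c$ lies in the degree-zero intersection and hence vanishes, so Lemma~\ref{lem:aaast1} forces $\tilde\varphi(c)=0$. You spell out a few details the paper leaves implicit (the $\ast$-compatibility of $\tilde\varphi$ and why $c^\ast c=0$), but the substance is identical.
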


\begin{proof}
The discussion preceeding the theorem proves that there exists a homomorphism of $R$-rings with involution
\begin{equation}
\varphi':\mathcal{C}=\mathcal{C}_\mu(R,\si,t)\to A_{p|q}^\pm 
\end{equation}
All that remains is to show that $\varphi'(\mathcal{I})=0$ where $\mathcal{I}$ is the unique maximal $\Z^m$-graded ideal trivially intersecting the degree zero component of $\mathcal{C}$.
If $a$ is a homogeneous element of $\mathcal{I}$ then $a^\ast\cdot a=0$ hence $\varphi'(a)^\ast \cdot \varphi'(a)=0$. By Lemma \ref{lem:aaast1}, it follows that $\varphi(a)=0$. 
\end{proof}

\begin{Remark}
Theorem \ref{thm:sufficient} provides a large family of consistent non-regular TGW algebras.
\end{Remark}

\begin{Remark} Let $p=3$, $q=2$, $m=4$ and
\[
\ga=\begin{bmatrix}
1  &    &      &   \\
-1 & 1  &      &   \\
   & -1 &  1    &    \\
\hdashline[2pt/2pt]
   &    & -1   & 1 \\
   &    &      &-1 \\ 
\end{bmatrix}
\]
(The dashed line separates even from odd rows.)
The corresponding TGW algebra $\mathcal{A}(\ga)^-$ is a quotient of $U\big(\Fgl(3|2)\big)$ (see Section \ref{sec:Lie-superalgebras}).
\end{Remark}

\subsection{Injectivity of $\varphi$}\label{sec:inj}
We prove a theorem which gives equivalent conditions for $\varphi$ defined in \eqref{eq:varphi-definition-pm} to be injective. This result will be used in Section \ref{sec:Lie-superalgebras}.

\begin{Lemma}[Weak injectivity of $\varphi$]
\label{lem:weak-inj}
If $g\in \Z^m$ and $a\in\mathcal{A}(\ga)^\pm_g$, $a\neq 0$, then $\varphi(a)\neq 0$.
\end{Lemma}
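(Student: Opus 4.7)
The plan is to combine the non-degeneracy of the gradation form on $\mathcal{A}(\ga)^\pm$ (Theorem \ref{thm:nondeg}) with the injectivity of $\varphi$ on the degree-zero subalgebra. The crucial preliminary observation is that, since $\varphi$ is a homomorphism of $R$-rings, $\varphi\circ\rho=\iota$; because $\iota$ is injective, so is $\rho$, hence $\mathcal{A}(\ga)^\pm$ is consistent. By Lemma \ref{lem:monomials} the degree-zero component then equals $\rho(R)$, and $\varphi$ acts on it as $\iota\circ\rho^{-1}$, which is injective.

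Now assume $a\in\mathcal{A}(\ga)^\pm_g$ is nonzero. By Theorem \ref{thm:nondeg}, non-degeneracy of the gradation form on $\mathcal{A}(\ga)^\pm$ supplies some $b\in\mathcal{A}(\ga)^\pm$ with $\mathfrak{p}_0(ab)\neq 0$. Decomposing $b$ into its $\Z^m$-homogeneous components, only the piece of degree $-g$ contributes to $\mathfrak{p}_0(ab)$, so I may replace $b$ by a homogeneous element of degree $-g$. Then $ab$ itself lies in $\mathcal{A}(\ga)^\pm_0=\rho(R)$ and is nonzero; write $ab=\rho(s)$ for some $0\neq s\in R$.

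Applying $\varphi$ then yields
\[\varphi(a)\,\varphi(b)=\varphi(ab)=\varphi(\rho(s))=\iota(s)\neq 0,\]
using injectivity of $\iota$. This forces $\varphi(a)\neq 0$, which is the claim of the lemma.

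The argument is quite short; what might initially look like the obstacle---having to control $\varphi(a)$ directly from the possibly complicated structure of $a$---is circumvented by producing a partner $b$ via non-degeneracy and pushing everything down to the degree-zero piece, where $\varphi$ is visibly injective. In particular, no appeal to any finer property of $A_{p|q}^\pm$ (such as Lemma \ref{lem:aaast1}) is needed at this stage; that ingredient will presumably only be required for upgrading this weak injectivity to genuine injectivity of $\varphi$ on all of $\mathcal{A}(\ga)^\pm$.
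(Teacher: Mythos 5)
Your proof is correct and follows essentially the same route as the paper: both arguments invoke the non-degeneracy of the gradation form (Theorem \ref{thm:nondeg}) to produce a homogeneous partner $b$ of degree $-g$ with $ab\neq 0$ in $\mathcal{A}(\ga)^\pm_0=\rho(R)$, and then use that $\varphi$ restricted to the degree-zero component is injective (via $\varphi\circ\rho=\iota$). Your spelling out of why $\varphi|_{\rho(R)}$ is injective is a slightly more careful rendering of a step the paper leaves implicit, but the argument is the same.
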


\begin{proof}
Suppose $a\neq 0$. Then, by the non-degeneracy of the gradation form of a TGW algebra, $ba\neq 0$ for some $b\in\mathcal{A}(\ga)^\pm_{-g}$. Applying $\varphi$ we get $\varphi(ba)\neq 0$ since $\varphi|_{R_E}$ is injective. Hence $\varphi(b)\varphi(a)\neq 0$, so in particular $\varphi(a)\neq 0$. 
\end{proof}

Let $\ast:A_{p|q}^\pm\to A_{p|q}^\pm$, $a\mapsto a^\ast$, be the unique $\K$-linear map satisfying
$(a^\ast)^\ast=a,\,(ab)^\ast=b^\ast a^\ast$ for all $a,b\in A_{p|q}^\pm$, and $x_i^\ast=\partial_i$ for all $i$.

\begin{Lemma}\label{lem:aaast}
Let $\ga$ be a matrix satisfying the conditions of Theorem \ref{thm:sufficient} and let $\mathcal{A}(\ga)^\pm$ be the corresponding TGW algebra. Let $a\in\mathcal{A}(\ga)^\pm$ be a homogeneous element of degree $g\in \Z^m$. If $a^\ast \cdot a=0$ then $a=0$.
\end{Lemma}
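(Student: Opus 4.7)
The plan is to transfer the hypothesis across $\varphi$ to the Weyl-Clifford superalgebra, where the analogous statement (Lemma \ref{lem:aaast1}) has already been established, and then pull the conclusion back using the weak injectivity in Lemma \ref{lem:weak-inj}.

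First I would apply $\varphi$ to the equation $a^\ast\cdot a=0$. Since $\varphi$ is a homomorphism of $R$-rings with involution by Theorem \ref{thm:sufficient}, we have $\varphi(a^\ast)=\varphi(a)^\ast$, and multiplicativity gives $\varphi(a)^\ast\cdot \varphi(a)=0$ in $A_{p|q}^\pm$. Next, observe that $\varphi$ is a graded homomorphism: because $\varphi(X_i)=x_1^{(\ga_{1i})}\cdots x_n^{(\ga_{ni})}$ is homogeneous of $\Z^n$-degree equal to the $i$-th column of $\ga$, the map sends $\mathcal{A}(\ga)^\pm_g$ into $A_{p|q}^\pm{}_{\ga g}$. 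Thus $\varphi(a)$ is a homogeneous element of $A_{p|q}^\pm$ of degree $\ga g\in\Z^n$, and Lemma \ref{lem:aaast1} applies and yields $\varphi(a)=0$.

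Finally, since $a$ is itself homogeneous, the contrapositive of Lemma \ref{lem:weak-inj} immediately gives $a=0$, completing the proof.

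I do not anticipate a serious obstacle: the nontrivial content has already been packaged into Lemmas \ref{lem:aaast1} and \ref{lem:weak-inj}, and the role of this proof is simply to patch them together using the involution-preserving and grading-preserving properties of $\varphi$. The only minor point worth being explicit about is that $\varphi$ respects the two different grading groups $\Z^m$ and $\Z^n$ via the matrix $\ga$, so that homogeneity of $a$ translates to homogeneity of $\varphi(a)$, which is exactly what is needed to invoke Lemma \ref{lem:aaast1}.
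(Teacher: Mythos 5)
Your proof is correct and is essentially the paper's own argument (the paper phrases it contrapositively, starting from $a\neq 0$, but uses exactly the same two ingredients: Lemma \ref{lem:weak-inj} and Lemma \ref{lem:aaast1}, combined via the fact that $\varphi$ is a graded map of rings with involution). Your explicit remark that $\varphi$ carries $\Z^m$-homogeneity to $\Z^n$-homogeneity via $\ga$ is a detail the paper leaves implicit, and it is correctly justified.
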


\begin{proof}
Suppose $a\neq 0$. By Lemma \ref{lem:weak-inj}, $\varphi(a)\neq 0$. So, by Lemma \ref{lem:aaast1}, $\varphi(a)^\ast\cdot\varphi(a)\neq 0$. Since $\varphi$ is a map of rings with involution, $\varphi(a^\ast\cdot a)\neq 0$. Hence $a^\ast \cdot a\neq 0$.
\end{proof}

\begin{Remark}
If $\la_{ii}=1$ for all $i$ then $R_{p|q}^\pm$ 
defined in \eqref{eq:Rpq-explicit-definition} is a domain. Then, by \cite[Prop.~2.9]{FutHar2012b}, $\mathcal{A}(\ga)^\pm$ is also a domain. Hence Lemma \ref{lem:aaast} holds trivially in this case.
\end{Remark}

For a $\Z I$-graded algebra $A=\oplus_{g\in\Z I}A_g$ we define the \emph{(graded) support} of $A$ to be $\Supp(A):=\big\{g\in\Z I\mid A_g\neq \{0\}\big\}$.

\begin{Lemma}\label{lem:supersupport}
Let $\mathcal{A}(\ga)^\pm$ be a TGW algebra as constructed in Theorem \ref{thm:sufficient}. 
Let $S^\pm\subseteq \Z^m$ be the support of $\mathcal{A}(\ga)^\pm$.
Then, regarding $\ga$ as a $\Z$-linear map from $\Z^m$ to $\Z^n$ we have
\begin{equation}
\begin{aligned}
\ga(S^+) &\subseteq \{-1,0,1\}^p \times \Z^q,\\
\ga(S^-) &\subseteq \Z^p\times \{-1,0,1\}^q.
\end{aligned}
\end{equation}
\end{Lemma}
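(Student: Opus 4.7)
The plan is to reduce the claim about $\Supp(\mathcal{A}(\ga)^\pm)$ to a statement about $\Supp(A_{p|q}^\pm)$, and then read off the latter from the defining relations of the Clifford/Weyl superalgebra.

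First I would take $g \in S^\pm$ and pick a nonzero homogeneous element $a \in \mathcal{A}(\ga)^\pm_g$. Applying the weak injectivity result (Lemma \ref{lem:weak-inj}) gives $\varphi(a) \neq 0$ in $A_{p|q}^\pm$. The key observation is that $\varphi$ is a $\Z^n$-graded homomorphism (where $\Z^m$ is identified with its image under $\ga:\Z^m\to\Z^n$): indeed, by construction $\varphi(X_i) = x_1^{(\ga_{1i})}\cdots x_n^{(\ga_{ni})}$ has $\Z^n$-degree equal to the $i$-th column of $\ga$, namely $\ga(\Be_i)$. Consequently $\varphi(a)$ is a nonzero homogeneous element of $A_{p|q}^\pm$ of $\Z^n$-degree $\ga(g)$, so $\ga(g) \in \Supp(A_{p|q}^\pm)$.

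It remains to show $\Supp(A_{p|q}^+) \subseteq \{-1,0,1\}^p\times\Z^q$ and $\Supp(A_{p|q}^-) \subseteq \Z^p\times\{-1,0,1\}^q$. For this I would examine the relation $[x_i,x_j]_\pm = 0$ with $i=j$. In $A_{p|q}^-$, for odd $i$ (i.e.\ $i>p$) this reads $2x_i^2 = 0$, and similarly $2\partial_i^2=0$, so $x_i$ and $\partial_i$ are nilpotent of square zero. Thus any $\Z^n$-homogeneous monomial in $A_{p|q}^-$ has its $i$-th degree-component confined to $\{-1,0,1\}$ for $i>p$, giving the desired containment. The analogous argument for $A_{p|q}^+$ uses that $x_i^2=\partial_i^2=0$ for even $i\in\iv{1}{p}$ (since there $[x_i,x_i]_+ = 2x_i^2 = 0$).

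The whole argument is short; the only mild subtlety is making sure that $\varphi$ is indeed graded with respect to $\ga$, which is immediate from the explicit formula for $\varphi(X_i)$ and the fact that $\varphi(Y_i) = \varphi(X_i)^\ast$ has the opposite $\Z^n$-degree. There is no serious obstacle—the lemma is essentially a pullback through $\varphi$ of the nilpotency of the relevant (anti)commuting generators of $A_{p|q}^\pm$.
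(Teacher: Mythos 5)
Your proof is correct and follows essentially the same route as the paper: both pull the claim back through $\varphi$ via Lemma~\ref{lem:weak-inj} and then exploit the square-zero relations $x_i^2=\partial_i^2=0$ in the Clifford-type directions. The only presentational difference is that the paper works with a nonzero reduced monomial (Lemma~\ref{lem:monomials}) and spells out the alternation argument --- a word in $x_r,\partial_r$ whose net exponent exceeds $1$ in absolute value must contain two adjacent equal letters and hence vanishes --- which is the one small step your \emph{thus} leaves implicit.
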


\begin{proof}
We consider the case $S^-$. The other case is analogous.
Let $g\in S^-$. Since any TGW algebra is generated as a left $R$ module by the reduced monomials (Lemma \ref{lem:monomials}), there exist sequences
$(i_1,i_2,\ldots,i_k)$ and $(j_1,j_2,\ldots,j_l)$ of elements from $\{1,2,\ldots,m\}$ with $\{i_1,i_2,\ldots,i_k\}\cap\{j_1,j_2,\ldots,j_l\}=\emptyset$ such that
\[a=Y_{i_1}Y_{i_2}\cdots Y_{i_k}\cdot X_{j_1}X_{j_2}\cdots X_{j_l}\]
is a nonzero element in $\mathcal{A}(\ga)^-_g$. By Lemma \ref{lem:weak-inj}, $\varphi(a)\neq 0$. We have
\[\varphi(a)=\prod_{r=1}^m
x_r^{(-\ga_{ri_1})}\cdots x_r^{(-\ga_{ri_k})}\cdot x_r^{(\ga_{rj_1})}\cdots x_r^{(\ga_{rj_l})}.
\]
For $r>p$, a product of the form
\[x_r^{(-\ga_{ri_1})}\cdots x_r^{(-\ga_{ri_k})}\cdot x_r^{(\ga_{rj_1})}\cdots x_r^{(\ga_{rj_l})} \]
can only be nonzero if the factors $x_r^{(\beta)}$ alternate between $x_r$ and $\partial_r$ (ignoring factors where $\beta=0$). In particular, the number of $x_r$'s must differ from the number of $\partial_r$'s by at most one.
\end{proof}

To prove that homomorphisms from TGW algebras are injective, the following result is useful.

\begin{Theorem}[{\cite[Thm.~3.6]{HarOin2013}}]
\label{thm:essential}
If $A=\TGWA{\mu}{R}{\si}{t}$ is consistent, then the centralizer $C_A(R)$ of $R$ in $A$ is an essential subalgebra of $A$, in the sense that $J\cap C_A(R)\neq \{0\}$ for any nonzero ideal $J$ of $A$.
\end{Theorem}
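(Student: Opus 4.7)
The plan is to combine a support-minimization argument with the non-degeneracy of the gradation form (Theorem \ref{thm:nondeg}). Let $J \subseteq A = \TGWA{\mu}{R}{\si}{t}$ be a nonzero ideal. Every $a \in A$ has a finite $\Z I$-homogeneous decomposition $a = \sum_g a_g$; call $S(a) = \{g \in \Z I : a_g \neq 0\}$ its \emph{support}. First I would choose $a \in J \setminus \{0\}$ with $|S(a)|$ minimal, and then argue that $a$ can in fact be taken to lie in $C_A(R)$.

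The preliminary step is to arrange $0 \in S(a)$. Fix any $g_0 \in S(a)$. Since $a_{g_0} \in A_{g_0}$ is nonzero, Theorem \ref{thm:nondeg} yields some $b \in A_{-g_0}$ such that $a_{g_0} b$ is a nonzero element of $A_0$. Then $ab \in J$ is nonzero, has support contained in the shifted set $S(a) - g_0$, and contains $0$. By minimality of $|S(a)|$ we must have $|S(ab)| = |S(a)|$, so replacing $a$ by $ab$ preserves minimality and arranges $0 \in S(a)$.

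The main step then uses the ideal property: for any $r \in R$ the commutator $ra - ar$ lies in $J$, and the TGW relation $a_g r = \si_g(r) a_g$ (which rearranges to $r a_g = a_g \si_g^{-1}(r)$) gives
\begin{equation*}
ra - ar \;=\; \sum_{g \in S(a)} a_g \bigl(\si_g^{-1}(r) - r\bigr).
\end{equation*}
Since $\si_0 = \Id$, the $g = 0$ summand vanishes, so $S(ra - ar) \subsetneq S(a)$. Minimality then forces $ra - ar = 0$ for every $r \in R$, i.e.\ $a \in C_A(R)$. Hence $0 \neq a \in J \cap C_A(R)$.

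The hardest point is the first step, since it requires the precise form of non-degeneracy delivered by Theorem \ref{thm:nondeg}: for a nonzero homogeneous $a_{g_0} \in A_{g_0}$ one must be able to find $b$ already in the single graded component $A_{-g_0}$ with $a_{g_0} b \neq 0$ in $A_0$. Consistency enters tacitly through the identification $A_0 = \rho(R) \cong R$ (a consequence of Lemma \ref{lem:monomials}, since the only reduced monomial of degree $0$ is the empty product); without this identification the conclusion $a \in C_A(R)$ would not literally refer to the embedded copy of $R$ used to define $C_A(R)$.
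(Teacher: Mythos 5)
The paper itself does not prove this statement --- it is quoted from \cite[Thm.~3.6]{HarOin2013} --- so there is no in-paper argument to compare against; your proof is correct and is essentially the standard argument from that reference (pick an element of the ideal of minimal support, translate its support to contain $0$ using non-degeneracy of the gradation form from Theorem \ref{thm:nondeg}, then shrink the support by commuting with $R$). The one point worth recording: the identity $ra_g=a_g\si_g^{-1}(r)$ for an \emph{arbitrary} $a_g\in A_g$ --- and in particular the vanishing of the $g=0$ term, which is $ra_0-a_0r$ with $a_0\in\rho(R)=A_0$ --- is obtained by writing $a_g$ as a left $R$-combination of reduced monomials (Lemma \ref{lem:monomials}) and pushing $r$ through, and this step uses that $R$ is commutative. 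The paper's definition of a TGW datum does not formally impose commutativity of $R$, but every instance used here ($R^\pm_{p|q}$) is commutative, so this is a hypothesis to make explicit rather than a gap in the argument.
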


\begin{Theorem}\label{thm:super_injectivity}
Let $\ga$ be a matrix as in Theorem \ref{thm:sufficient} and $A=\mathcal{A}(\ga)^\pm$ be the corresponding TGW algebra. Put $R=R_{p|q}^\pm$. The following statements are equivalent.
\begin{enumerate}[{\rm (i)}]
\item $R$ is a maximal commutative subalgebra of $A$;
\item If $g\in\Supp(A)$ is such that $\si_g:=\prod_{i=1}^m \si_i^{g_i}=\Id_R$, then $g=0$;
\item Put
\begin{equation}
\Z^{p|q}_- =\Z^p\times (\Z/2\Z)^q,\qquad 
\Z^{p|q}_+ = (\Z/2\Z)^p\times \Z^q.
\end{equation}
Then the composition
\begin{equation}
\Supp(A)\longrightarrow \Z^m \overset{\ga}{\longrightarrow} \Z^n= \Z^p\times \Z^q \overset{P}{\longrightarrow} \Z^{p|q}_\pm 
\end{equation}
is injective (the first map is inclusion and the last is canonical projection);
\item The restriction of $\ga:\Z^m\to \Z^n$ to $\Supp(A)$ is injective;
\item The map $\varphi$ defined in \eqref{eq:varphi-definition-pm} is injective.
\end{enumerate}
\end{Theorem}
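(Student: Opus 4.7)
The plan is to close the cycle of implications
$(iv)\Rightarrow(v)\Rightarrow(i)\Rightarrow(ii)\Rightarrow(iii)\Rightarrow(iv)$.
The first three arrows and the last are direct with the tools already at hand; the main content lies in the step $(ii)\Rightarrow(iii)$.

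For $(iv)\Rightarrow(v)$, I would take $a \in \ker\varphi$ and decompose $a = \sum_{g \in \Supp(A)} a_g$ into $\Z^m$-homogeneous pieces. Each $\varphi(a_g)$ lives in the $\Z^n$-homogeneous component $(A_{p|q}^\pm)_{\ga(g)}$ of $A_{p|q}^\pm$, and by injectivity of $\ga|_{\Supp(A)}$ these degrees are pairwise distinct; so $\sum\varphi(a_g) = 0$ forces each $\varphi(a_g) = 0$, and Lemma~\ref{lem:weak-inj} gives $a_g = 0$. For $(v)\Rightarrow(i)$, if $a \in A$ commutes with $R$ then $\varphi(a)$ commutes with $\varphi(R) = R_{p|q}^\pm$; by Lemma~\ref{lem:RpqMaxComm} this forces $\varphi(a) \in R_{p|q}^\pm$, and injectivity of $\varphi$ together with $\varphi|_R$ being an isomorphism onto $R_{p|q}^\pm$ then places $a$ itself in $R$. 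For $(i)\Rightarrow(ii)$, a nonzero $a \in A_g$ with $\si_g = \Id$ satisfies $ar = \si_g(r)a = ra$ for every $r \in R$, so it centralizes $R$ and hence lies in $R$ by $(i)$, forcing $g = 0$. Finally $(iii)\Rightarrow(iv)$ is immediate, as injectivity of $P\circ\ga|_{\Supp(A)}$ implies injectivity of the factor $\ga|_{\Supp(A)}$.

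The equivalence $(ii)\Leftrightarrow(iii)$ rests on the observation, read off from \eqref{eq:sigma-from-gamma2}, that $\si_g$ depends on $\ga(g)$ only through its class $(P\circ\ga)(g)\in\Z^{p|q}_\pm$. Indeed $\tau_j$ has order two exactly when $\la_{jj} = -1$, which corresponds precisely to the coordinates collapsed modulo $2$ by $P$; hence $\si_g = \Id$ if and only if $(P\circ\ga)(g) = 0$ for every $g\in\Z^m$. Lemma~\ref{lem:supersupport} further constrains $\ga$ on the support so that, for $g\in\Supp(A)$, the three conditions $\si_g = \Id$, $(P\circ\ga)(g) = 0$, and $\ga(g) = 0$ are all equivalent. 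The direction $(iii)\Rightarrow(ii)$ then follows by specializing $g_2 = 0$. For $(ii)\Rightarrow(iii)$, given $g_1,g_2\in\Supp(A)$ with $(P\ga)(g_1) = (P\ga)(g_2)$, so that $\si_{g_1-g_2} = \Id$, it suffices to place $g_1 - g_2\in\Supp(A)$, for then $(ii)$ applied to $g_1-g_2$ yields $g_1 = g_2$.

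The main obstacle is this last step. To show $g_1 - g_2\in\Supp(A)$, for suitable nonzero $a\in A_{g_1}$ and $b\in A_{g_2}$ I would consider $ab^\ast\in A_{g_1-g_2}$ and argue that for some choice of $a,b$ this product is nonzero. Under $\varphi$ it maps to $\varphi(a)\varphi(b)^\ast\in (A_{p|q}^\pm)_{\ga(g_1)-\ga(g_2)}$, and Lemma~\ref{lem:supersupport} restricts the possible shapes of this difference. One then uses the cyclic $R$-module structure of each graded piece of $A_{p|q}^\pm$ together with Lemma~\ref{lem:aaast} to pick generators whose product in $(A_{p|q}^\pm)_{\ga(g_1)-\ga(g_2)}$ avoids the annihilator ideal (explicitly described by products of polynomials and idempotents in the $u_i$); Lemma~\ref{lem:weak-inj} then pulls nonvanishing of $\varphi(ab^\ast)$ back to $ab^\ast\neq 0$ in $\mathcal{A}(\ga)^\pm$, giving $g_1-g_2\in\Supp(A)$ and completing the argument.
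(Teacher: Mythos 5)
Your arrows (iv)$\Rightarrow$(v)$\Rightarrow$(i)$\Rightarrow$(ii) and (iii)$\Rightarrow$(iv) are essentially fine (the paper gets (v) from (i) via the essentiality of $C_A(R)$, Theorem \ref{thm:essential}, rather than your degree-separation argument, but both routes work). The genuine problem is exactly the step you flag as the main obstacle, (ii)$\Rightarrow$(iii): the claim that $g_1-g_2\in\Supp(A)$ whenever $g_1,g_2\in\Supp(A)$ and $\si_{g_1-g_2}=\Id_R$ is \emph{false}, and no choice of $a\in A_{g_1}$, $b\in A_{g_2}$ will rescue it. The paper's own example with $m=2$, $p=0$, $q=2$, $\ga=\left[\begin{smallmatrix}1&0\\1&-1\end{smallmatrix}\right]$ gives $g_1=(1,2)$ and $g_2=(-1,0)$, both in $\Supp(A)$, with $\ga(g_1)=(1,-1)$ and $\ga(g_2)=(-1,-1)$ congruent modulo $2$, yet $g_1-g_2=(2,2)\notin\Supp(A)$. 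The same example shows that literal set-theoretic injectivity of $P\circ\ga$ on $\Supp(A)$ fails even though (i), (ii), (iv), (v) all hold there (the matrix $\ga$ is invertible over $\Z$); so in that reading you are attempting to prove a statement that is not true, and the support of these non-regular TGW algebras is simply not closed under the differences you need.

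What makes the theorem correct, and what the paper's proof actually establishes, is the ``kernel'' reading of (iii) and (iv): the only $g\in\Supp(A)$ mapped to $0$ is $g=0$. Under that reading, (ii)$\Rightarrow$(iii) is immediate from the observation you already made, namely that $\si_g=\Id_R$ iff $(P\circ\ga)(g)=0$ (precisely the $\tau_r$ with $\la_{rr}=-1$ have order two, and these are the coordinates collapsed by $P$); no differences of support elements are needed. Likewise (iv)$\Rightarrow$(iii) uses Lemma \ref{lem:supersupport} to upgrade $(P\circ\ga)(g)=0$ to $\ga(g)=0$ for $g\in\Supp(A)$. Be aware that your (iv)$\Rightarrow$(v) argument uses the stronger ``pairwise distinct degrees'' reading of (iv), which the kernel reading of (iii)$\Rightarrow$(iv) does not supply; this is why the paper instead derives (v) from (i) using Theorem \ref{thm:essential}.
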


\begin{proof}
(i)$\Rightarrow$(ii): Suppose $g\in\Supp(A)$ with $\si_g=\Id_R$. Then for any $a\in A_g$ and $r\in R$ we have $ar=\si_g(r)a=ra$ which means that $A_g\subseteq C_A(R)$. But $C_A(R)=R$ by (i). Thus, since $A_g\neq \{0\}$, this means that $g$ must be $0$ and $A_g=R$.

\noindent(ii)$\Rightarrow$(iii): Suppose $P\circ \gamma(g)=0$ in $\Z^{p|q}_\pm$ for some $g\in\Supp(A)$. Then $\si_g=\prod_{r=1}^n\tau_r^{\ga(g)_r}=\Id_R$ because $\tau_r^2=\Id_R$ for $r>p$
when $\pm=-$ and for $r\le p$ when $\pm=+$. By (ii) this implies $g=0$.

\noindent(iii)$\Rightarrow$(i): For simplicity we assume $\pm=-$. The other case is symmetric.
Suppose $a\in C_A(R)$, $a\neq 0$. Since $C_A(R)$ is a graded subalgebra of $A$ we may without loss of generality suppose there exists $g\in\Z^m$ such that $a\in A_g\cap C_A(R)$. Since $a\neq 0$, this implies $g\in\Supp(A)$. For all $r\in R$ we have $(\si_g(r)-r)a=ar-ra=0$. Taking $r=u_j$ we get
\[0=(\si_g(u_j)-u_j)a=(\tau_j^{\ga(g)_j}(u_j)-u_j)a=\begin{cases}
-\ga(g)_ja,& j\le p,\\
0,& j>p, \ga(g)_j=0 \text{ in $\Z/2\Z$}\\
(1-2u_j)a,& j>p, \ga(g)_j=1+2\Z.
\end{cases}\]
Since $a\neq 0$, we get $\ga(g)_j=0$ for all $j\le p$.
Suppose $j>p$ and $\ga(g)_j=1+2\Z$, then $0=u_j(1-2u_j)a=-u_ja$ since $u_j^2=u_j$. Combining this with $(1-2u_j)a=0$ we get $a=0$, a contradiction.
Therefore, for $j>p$ we must have $\ga(g)_j=0$ in $\Z/2\Z$. This proves that $\ga(g)=0$ in $\Z^p\oplus (\Z/2\Z)^q$.

\noindent(iii)$\Rightarrow$ (iv): Trivial.

\noindent(iv)$\Rightarrow$ (iii): Suppose $P\circ\ga(g)=0$ for some $g\in\Supp(A)$. By Lemma \ref{lem:supersupport} we get $\gamma(g)=0$ so by (iv), $g=0$.

\noindent(i)$\Rightarrow$(v): Let $K=\ker(\varphi)$. If $K\neq\{0\}$, then by Theorem \ref{thm:essential}, $K\cap C_A(R)\neq \{0\}$. By (i), $C_A(R)=R$. Hence $K\cap R\neq \{0\}$. But by Theorem \ref{thm:sufficient}, $\varphi$ is a map of $R$-rings with involution and thus in particular $\varphi|_R=\Id_R$ (where we used the injective maps $\rho$ and $\iota$ to identify $R$ with its image in $A$ and $A_E(\K)$ respectively). This contradiction shows that $K=\{0\}$.

\noindent(v)$\Rightarrow$(i): If $a\in C_A(R)$ then $\varphi(a)\in C_{A_{p|q}^\pm}(R)$ which equals $R$ by Lemma \ref{lem:RpqMaxComm}. By (v) this implies $a\in R$.
\end{proof}

\begin{Example} \label{ex:inj-g}
 Let $p,q$ be non-negative integers and $n=p+q>0$. Consider the matrices
\[
\al=\begin{bmatrix}
1  &    &        &  \\
-1 & 1  &        & \\
   & -1 &        &  \\
   &    & \ddots & 1 \\
   &    &        & -1
\end{bmatrix}
\quad
\be=\begin{bmatrix}
1  &    &        & & \\
-1 & 1  &        & &\\
   & -1 &        &  &\\
   &    & \ddots & 1 &\\
   &    &        & -1 & 1
\end{bmatrix}
\quad 
\ga=\begin{bmatrix}
1  &    &        & & \\
-1 & 1  &        & &\\
   & -1 &        &  &\\
   &    & \ddots & 1 & \\
   &    &        & -1 & 2 
\end{bmatrix}
\]
These are $n\times m$ matrices (where $m=n-1$ in the case of $\al$ and $m=n$ for $\be,\ga$) and define $\Z$-linear maps $\Z^m\to \Z^n$. In each case the top $p$ rows are defined to be even and the remaining $q$ rows are odd. 
It is easy to see that these maps are injective, hence by Theorem \ref{thm:super_injectivity}(iv)$\Rightarrow$(v), the homomorphism $\varphi:\mathcal{A}(\zeta)^\pm\to A_{p|q}^\pm$ is injective for $\zeta=\al,\be,\ga$.
\end{Example}

\subsection{A description of the graded support of $\mathcal{A}(\ga)^-$}
\label{sec:support}
Although sufficient for the application to Lie superalgebras, the characterization in Theorem \ref{thm:super_injectivity} of the injectivity of the map \eqref{eq:varphi-definition-pm} is not completely satisfactory because we lack a good description of the support of $\mathcal{A}(\ga)^\pm$.
In this section we give a combinatorial description of the support of $\mathcal{A}(\ga)^-$ in terms of certain pattern-avoiding vector compositions of the columns of $\ga$. A similar analysis applies to $\mathcal{A}(\ga)^+$. This allows us to compute the support in the certain cases. In addition, it shows that that this is a non-trivial problem for a general (non-regular) TGW algebra.

Put $W=\Z^d$. A \emph{$d$-dimensional vector composition} of $w\in W$ is a tuple $c=(c_1,c_2,\ldots,c_\ell)\in W^\ell$ such that $c_1+c_2+\cdots+c_\ell=w$. The non-negative integer $\ell$ is the \emph{length} of $c$. The $c_i$ are called the \emph{parts} of the composition $c$. A given vector $u\in W$ \emph{appears with multiplicity $m$ (in $c$)} if $c_j=u$ for exactly $m$ choices of $j\in\iv{1}{\ell}$.
\begin{Example} \label{ex:comp}
$
\left(
\left[\begin{smallmatrix}1\\  1\\ 1\end{smallmatrix}\right],
\left[\begin{smallmatrix}3\\  0\\-1\end{smallmatrix}\right]
\left[\begin{smallmatrix}0\\ -1\\ 1\end{smallmatrix}\right],
\left[\begin{smallmatrix}3\\  0\\-1\end{smallmatrix}\right]
\right)$
is a $3$-dimensional vector composition of 
$\left[\begin{smallmatrix}7\\0\\0\end{smallmatrix}\right]$.
\end{Example}

\begin{Theorem}
Let $A=\mathcal{A}(\ga)^-$ be a TGW algebra constructed as in Theorem \ref{thm:sufficient}. The following are equivalent for $g\in \Z^m$:
\begin{enumerate}[{\rm (i)}]
\item $g\in \Supp(A)$;
\item there exists an $n$-dimensional vector composition of $\ga(g)$ of length $|g|=\sum_{i\in V}|g_i|$ such that 
\begin{enumerate}[{\rm (a)}]
\item each part is of the form $\sgn(g_i)\ga(\Be_i)$ for $i\in V$ which appears with multiplicity $|g_i|$,
\item for each $r>p$ the sequence $\big( \sgn(g_{i_1})\ga_{ri_1},\ldots,\sgn(g_{i_{|g|}})\ga_{ri_{|g|}}\big)$ contains no consecutive subsequence of the form
\[(1,0,\ldots,0,1)\quad\text{or}\quad (-1,0,\ldots,0,-1)\] 
where there are zero or more $0$'s.
\end{enumerate}
\end{enumerate}
\end{Theorem}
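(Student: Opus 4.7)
The plan is to use the homomorphism $\varphi\colon\mathcal{A}(\ga)^-\to A_{p|q}^-$ from Theorem~\ref{thm:sufficient} together with its weak injectivity on each graded component (Lemma~\ref{lem:weak-inj}) to translate $g\in\Supp(A)$ into a non-vanishing statement for an explicit word in $A_{p|q}^-$, which in turn becomes a direct combinatorial condition via $x_r^2=\partial_r^2=0$ for $r>p$.

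For the direction $(ii)\Rightarrow(i)$, given a composition of $\ga(g)$ coming from an ordered list $(i_1,\ldots,i_{|g|})$ in $\iv{1}{m}$ in which each $i$ appears with multiplicity $|g_i|$, I form in $\mathcal{A}(\ga)^-$ the length-$|g|$ product $w=Z_1Z_2\cdots Z_{|g|}$ where $Z_h=X_{i_h}$ if $g_{i_h}>0$ and $Z_h=Y_{i_h}$ if $g_{i_h}<0$. By construction $\deg w=g$. Substituting $\varphi(X_i)=x_1^{(\ga_{1i})}\cdots x_n^{(\ga_{ni})}$ (and its involution for $Y_i$) and then collecting factors at the same row using the pairwise supercommutation $x_r^{(a)}x_s^{(b)}=\pm x_s^{(b)}x_r^{(a)}$ for $r\ne s$, one rewrites $\varphi(w)=\pm\prod_{r=1}^{n}f_r$ with $f_r=\prod_{h=1}^{|g|}x_r^{(\sgn(g_{i_h})\ga_{r i_h})}$. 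For $r\le p$ the factor $f_r$ lies in a Weyl algebra and is automatically nonzero, while for $r>p$ the hypothesis $|\ga_{ri}|\le 1$ keeps each exponent in $\{-1,0,1\}$, and since $x_r^2=\partial_r^2=0$, the product $f_r$ is nonzero precisely when the non-zero entries of $(\sgn(g_{i_h})\ga_{r i_h})_h$ alternate in sign --- that is, precisely the avoidance condition in (b). Hence $\varphi(w)\ne 0$, so $w\ne 0$, so $g\in\Supp(A)$.

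For $(i)\Rightarrow(ii)$, Lemma~\ref{lem:monomials} says that $A_g$ is spanned over $R$ by reduced monomials of degree $g$, so $A_g\ne\{0\}$ forces some reduced monomial $m=Y_{i_1}\cdots Y_{i_k}X_{j_1}\cdots X_{j_l}$ of degree $g$ to be nonzero in $A$. Listing its indices in order (the $Y$-indices first, contributing sign $-1$, then the $X$-indices, contributing sign $+1$) produces a composition of $\ga(g)$ of length $k+l=|g|$ with the multiplicities required by (a). By Lemma~\ref{lem:weak-inj}, $\varphi(m)\ne 0$, and the same row-by-row factorisation as above forces (b).

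I expect the main care-requiring point to be the row-by-row decomposition of $\varphi(w)$: one must verify that the supercommutations used to collect all factors at a given position $r$ only contribute nonzero scalars, which is automatic since every swap is between factors at distinct positions $r\ne s$ and thus produces a scalar $\pm 1$ with no new intra-position cancellations. Once this is in place, the identification of $f_r\ne 0$ (for $r>p$) with avoidance of the two forbidden subsequences is immediate from $x_r^2=\partial_r^2=0$.
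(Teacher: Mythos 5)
Your proposal is correct and follows essentially the same route as the paper: reduce to reduced monomials via Lemma \ref{lem:monomials}, transfer non-vanishing through $\varphi$ via Lemma \ref{lem:weak-inj}, and factor $\varphi$ of the monomial row by row so that the condition $x_r^2=\partial_r^2=0$ for $r>p$ becomes exactly the pattern-avoidance in (b). If anything, your explicit split into the two directions is slightly more careful than the paper's one-line "if and only if" chain, since it makes clear that for (ii)$\Rightarrow$(i) the word built from an arbitrary ordering need not be a reduced monomial but still witnesses $A_g\neq 0$.
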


\begin{proof}
By Lemma \ref{lem:monomials}, $g\in\Supp(A)$ if and only if $A_g$ contains a reduced monomial $a=Z_{i_1}Z_{i_2}\cdots Z_{i_{|g|}}$ (where each $Z_{i_k}\in\cup_{j\in V} \{X_j,Y_j\}$) such that $a\neq 0$, which by
Lemma \ref{lem:weak-inj} is equivalent to $\varphi(a)\neq 0$.
Put $\ep_k=\sgn(g_{i_k})$. We have
\[\varphi(a)= \varphi(Z_{i_1})\cdots\varphi(Z_{i_{|g|}})= \pm \prod_{r\in E} x_r^{(\ep_1 \ga_{ri_1})} \cdots x_r^{(\ep_{|g|} \ga_{ri_{|g|}})} \]
which is nonzero if and only if property (b) in the theorem holds.
\end{proof}

\begin{Example}
If $q=0$ then $\Supp\big(\mathcal{A}(\ga)^-\big)=\Z^m$ because condition (b) is void.
\end{Example}

\begin{Example}
Let $m=3, p=1, q=2$ and 
$\ga=\begin{bmatrix} 1& 3& 0 \\ \hdashline[2pt/2pt]  1 & 0 & -1\\ 1 & -1 & 1 \end{bmatrix}$.
Example \ref{ex:comp} shows that $(1,2,1)$ belongs to the graded support of the TGW algebra $\mathcal{A}(\ga)^-$.
On the other hand $(2,1,0)$ does not, because there is no vector composition of length $3$ with two parts equal to
$\left[\begin{smallmatrix}1\\  1\\ 1\end{smallmatrix}\right]$ and one part equal to
$\left[\begin{smallmatrix}3\\  0\\ -1\end{smallmatrix}\right]$ which avoids the pattern $(1,0,\ldots,0,1)$ in the second row.
\end{Example}

\begin{Example}
Let $m=2, p=0, q=1$ and $\ga=\begin{bmatrix} 1& -1\end{bmatrix}$. Then
\[\Supp\big(\mathcal{A}(\ga)\big)=\{(g_1,g_2)\in\Z^2 \mid  |g_1-g_2|\leq 1\}.\]
\end{Example}

\begin{Example}
Let $m=2, p=0, q=2$, $\ga=\begin{bmatrix} 1 & 0\\ 1 & -1 \end{bmatrix}$, then 
\[\Supp\big(\mathcal{A}(\ga)\big)=\{(0,0),\pm (0,1), \pm (1,0), \pm (1,1), \pm (1,2)\}.\]
\end{Example}

\section{Relation to $\mathfrak{gl}(m|n)$ and $\mathfrak{osp}(m|2n)$}
\label{sec:Lie-superalgebras}

Irreducible completely pointed weight modules have been classified and realized by ($q$-)differential operators in the case of simple finite-dimensional complex Lie algebras $\Fg$ in \cite{BriLem1987,BenBriLem1997} and over $U_q(\Fsl_n)$ in \cite{FutHarWil2015}.
In \cite[Sec.~6]{Cou2013}, Coulembier classified all irreducible completely pointed highest weight modules over the orthosymplectic Lie superalgebras $\Fosp(m|2n)$, and realized them by differential operators on supersymmetric Grassmann algebras. See also \cite{Nis1990} for a uniform treatment of spinor representations of orthosymplectic Lie superalgebras.
In this section we show that, analogously to the Lie algebra case \cite{HarSer2016}, the realization of $\Fosp(m|2n)$ by differential operators factors through a corresponding twisted generalized Weyl algebra of the form $\mathcal{A}(\al)$.

Recall that the Lie superalgebra $\mathfrak{gl}(m|n)$ is the Lie superalgebra of all linear transformations of $(m|n)$-dimensional vector superspace, and
$\mathfrak{osp}(m|2n)$ is the subalgebra of  $\mathfrak{gl}(m|2n)$ preserving a non-degenerate even symmetric bilinear form on an $(m|2n)$-dimensional vector 
superspace or, equivalently, the subalgebra of $\mathfrak{gl}(2n|m)$ preserving a non-degenerate even skew-symmetric bilinear form on an $(2n|m)$-dimensional 
vector  superspace.
The even part of $\mathfrak{osp}(m|2n)$ is the direct sum $\mathfrak{so}(m)\oplus\mathfrak{sp}(2n)$.
The Lie superalgebras $\mathfrak{gl}(m|n)$ and $\mathfrak{osp}(m|2n)$ are Kac-Moody superalgebras and can be described by Chevally generators and relations,
see \cite{Kac77}, as follows.
Let $p, q$ be nonnegative integers, $n=p+q>0$.
The Chevalley generators of $\mathfrak{gl}(p|q)$ are $e_1,\dots,e_{n-1}$, $h_1,\dots,h_n$, $f_1,\dots,f_{n-1}$, with convention that $e_p,f_p$ 
are odd and all other generators are even. They satisfy the relations
\begin{gather*}
[h_i,h_j]=0,\quad  [h_i,e_j]=\delta_{i,j}e_j-\delta_{i,j+1}e_j,\quad  [h_i,f_j]=-\delta_{i,j}f_j+\delta_{i,j+1}f_j, \\ [e_i,f_j]=\delta_{i,j}(h_i-(-1)^{\delta_{ip}}h_{i+1}).
\end{gather*}
The Lie superalgebra $\mathfrak{gl}(p|q)$ is the quotient of the infinite-dimensional Lie algebra with the above relations by the maximal ideal 
which intersects trivially the Cartan subalgebra generated by $h_1,\dots,h_n$.
The Chevalley generators of $\mathfrak{osp}(2p+1|2q)$ are obtained from those for $\mathfrak{gl}(p|q)$ by adding odd generators  $e_n,f_n$ and relations
$$[h_i,e_n]=\delta_{i,n}e_n,\quad [h_i,f_n]=-\delta_{i,n}f_n, \quad [e_n,f_n]=h_n, \quad [e_i,f_n]=[e_n,f_i]=0\;\text{if $n\neq i$}.$$
The Chevalley generators of $\mathfrak{osp}(2p|2q)$ are obtained from those for $\mathfrak{gl}(p|q)$ by adding even generators  $e^2_n,f^2_n$.
From the above description it is not difficult to see that we have an embeddings of Lie superalgebras 
$$\mathfrak{gl}(p|q)\subset \mathfrak{osp}(2p|2q)\subset \mathfrak{osp}(2p+1|2q).$$

\subsection{Weyl superalgebra and $\mathfrak{osp}(2p|2q)$}
Let $V$ be a vector superspace equipped with even skew-symmetric form $\omega:V\times V\to \K$. We define the Weyl superalgebra $W(V,\omega)$
as the quotient of the tensor superalgebra
$T(V)$ by the relations
$$v\otimes w-(-1)^{p(v)p(w)}w\otimes v=\omega(v,w).$$

\begin{Lemma}\label{Weyl} Let $\Fg$ denote the span of the elements of the form $vw+(-1)^{p(v)p(w)}wv$ for all $v,w\in V$. Then $\Fg$ is closed under the 
supercommutator
and the adjoint action of $\Fg$ on $V$ preserves the form $\omega$.
\end{Lemma}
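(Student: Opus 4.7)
The lemma combines two assertions about $\Fg = \operatorname{span}\{vw + (-1)^{p(v)p(w)}wv : v, w \in V\}$: closure under the supercommutator $[-,-]_s$ of $W(V, \omega)$, and preservation of $\omega$ by the adjoint action on $V$. I would handle both by first deriving an explicit formula for $[g, x]_s$ when $g = vw + \epsilon wv$ with $\epsilon = (-1)^{p(v)p(w)}$ and $x \in V$.

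Applying the (right) super-Leibniz rule $[ab, c]_s = a[b, c]_s + (-1)^{p(b)p(c)}[a, c]_s\, b$ together with the defining relation $[u, v]_s = \omega(u, v) \in \K$ for $u, v \in V$, a short direct calculation gives
\[
[vw + \epsilon wv, x]_s = \omega(w, x)\bigl(1 + \epsilon(-1)^{p(v)p(x)}\bigr) v + \omega(v, x)\bigl((-1)^{p(w)p(x)} + \epsilon\bigr) w,
\]
which visibly lies in $V$. Since $\omega$ is even, $\omega(w, x) \neq 0$ forces $p(x) = p(w)$, collapsing the first coefficient to $2$; the second analogously collapses to $2\epsilon$. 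In particular $[\Fg, V]_s \subseteq V$, and the adjoint action of $\Fg$ on $V$ is well-defined. For the $\omega$-invariance I would then substitute this formula into
\[
\omega([g, x]_s, y) + (-1)^{p(g)p(x)}\omega(x, [g, y]_s) = 0
\]
and apply the super-skew-symmetry $\omega(a, b) = -(-1)^{p(a)p(b)}\omega(b, a)$ to rewrite all four resulting scalar products in a common orientation; a term-by-term sign check shows they cancel in two pairs.

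The main obstacle is closure under the supercommutator. My preferred route is: since $\ad\colon W(V, \omega) \to \End_\K(W(V, \omega))$ is a super-Lie homomorphism, the identity $\ad_{[g_1, g_2]_s} = [\ad_{g_1}, \ad_{g_2}]_s$ combined with the previous step shows $\ad_{[g_1, g_2]_s}$ preserves $V$. Using the tensor-degree filtration $F^\bullet$ on $W(V, \omega)$ whose associated graded is the super-symmetric algebra on $V$ and for which $[F^i, F^j]_s \subseteq F^{i+j-2}$, one has $[g_1, g_2]_s \in F^2$; it remains to pin down elements of $F^2$ whose $\ad$-action preserves $V$ as lying in $\Fg$. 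A purely computational alternative is to expand $[vw + \epsilon_1 wv, ab + \epsilon_2 ba]_s$ by bilinearity and two rounds of super-Leibniz; every scalar $\omega$-factor extracted leaves behind a two-element product $xy$ with $x, y \in V$, which decomposes as $\tfrac{1}{2}(xy + (-1)^{p(x)p(y)}yx) + \tfrac{1}{2}\omega(x, y)$. The symmetrized halves lie in $\Fg$, and the heart of the argument is to verify that the scalar halves cancel pairwise across the four sub-commutators once the signs $\epsilon_1, \epsilon_2$ and the super-signs are disciplined. In either route the recurring technical difficulty is sign bookkeeping, but the cancellations are ultimately forced by the super-skew-symmetry of $\omega$.
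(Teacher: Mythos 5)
Your first two steps are sound. The formula for $[vw+(-1)^{p(v)p(w)}wv,x]$ is correct and establishes $[\Fg,V]\subseteq V$ just as in the paper, and your direct substitution-and-sign-check for the $\omega$-invariance will go through, though the paper gets it for free: since $\omega(u_1,u_2)=[u_1,u_2]$ for $u_1,u_2\in V$ and $[u_1,u_2]$ is a central scalar, the invariance identity is literally the super Jacobi identity $[vw,[u_1,u_2]]=0$ read backwards. That is a genuinely slicker route, but yours is not wrong.

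The gap is in the closure under the supercommutator, which you correctly flag as the main obstacle but do not actually close. Your preferred route cannot work as stated: $\ad$ annihilates scalars, so ``elements of $F^2$ whose $\ad$-action preserves $V$'' is at best $\Fg\oplus\K$ (and, since the lemma does not assume $\omega$ non-degenerate, possibly also part of $V$), whereas the whole difficulty is precisely to exclude a scalar component of $[g_1,g_2]$. Concretely, in the ordinary Weyl algebra $[x^2,\partial^2]=-4x\partial-2$, and the scalar $-2$ is exactly what recombines $-4x\partial$ into the element $-2(x\partial+\partial x)$ of $\Fg$; no argument via the adjoint action on $V$ can see this. Your computational alternative is the right idea, but the cancellation you defer \emph{is} the content of the statement, so it must be exhibited; moreover it is forced by the evenness of $\omega$, not only by its skew-symmetry. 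Writing $[vw,xz]=[vw,x]z+(-1)^{p(vw)p(x)}x[vw,z]$ and splitting each of the four resulting products $uz$, $xu'$ (with $u,u'\in\{v,w\}$) as its symmetrization plus $\tfrac12\omega(\cdot,\cdot)$, two of the scalar contributions cancel by skew-symmetry and the remaining two combine into
\[
\tfrac12\,\omega(w,x)\,\omega(v,z)\bigl(1-(-1)^{p(v)p(x)+p(w)p(z)}\bigr),
\]
which vanishes because $\omega(w,x)\omega(v,z)\neq 0$ forces $p(x)=p(w)$ and $p(z)=p(v)$, making the exponent even. The paper lightens the bookkeeping by first noting $vw+(-1)^{p(v)p(w)}wv=2vw-\omega(v,w)$, so that only $\ad_{vw}$ and a single application of the Leibniz rule are needed, but the decisive step is the same parity argument, and your write-up needs it spelled out.
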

\begin{proof} Note that
$$vw+(-1)^{p(v)p(w)}wv=2vw-\omega(v,w)$$
and
$$[vw,u]=v[w,u]+(-1)^{p(w)p(u)}[v,u]w=\omega(w,u)v+(-1)^{p(w)p(u)}\omega(v,u)w.$$
The super Jacobi identity ensures that $\omega$ is $\operatorname{ad}_{vw}$-invariant. Indeed,
\begin{multline*}
\omega([vw,u_1],u_2)+(-1)^{p(vw)p(u_1)}\omega(u_1,[vw,u_2])  \\
=[[vw,u_1],u_2]+(-1)^{p(vw)p(u_1)}[u_1,[vw,u_2]] =[vw,[u_1,u_2]]=0.
\end{multline*}
Finally, $\Fg$ is closed under supercommutator as
$$[vw,xz]=[vw,x]z+(-1)^{p(vw)p(x)}x[vw,z]=[vw,x]z+(-1)^{p(vw)p(xz)}[vw,z]x.$$
\end{proof}

\begin{Corollary}\label{corWeyl} If $\omega$ is non-degenerate then $\Fg$ constructed in the previous lemma is isomorphic to
$\mathfrak{osp}(r|s)$ where $r=\dim V_1$ and $s=\dim V_0$.
\end{Corollary}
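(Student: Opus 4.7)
The plan is to show that the restriction $\rho: \Fg \to \Fosp(V,\om)$ of the adjoint action of $W(V,\om)$ to $V$ is an isomorphism of Lie superalgebras. Lemma \ref{Weyl} already provides that $\rho$ is a well-defined Lie superalgebra homomorphism landing in $\Fosp(V,\om)$, so only bijectivity remains.

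The key idea is to factor $\rho$ through the super-symmetric square $S^2(V)$. Introduce two maps of super vector spaces,
\[
\tilde\sigma: S^2(V) \to \Fg, \qquad v\cdot w \longmapsto vw + (-1)^{p(v)p(w)}wv,
\]
\[
\sigma: S^2(V) \to \Fosp(V,\om), \qquad \sigma(v\cdot w)(u) = \om(w,u)v + (-1)^{p(w)p(u)}\om(v,u)w.
\]
Both are well-defined on $S^2(V)$: a direct sign check shows that $vw + (-1)^{p(v)p(w)}wv$ and $\sigma(v\cdot w)$ both satisfy the required super-symmetry under $(v,w)\leftrightarrow(w,v)$ (using evenness of $\om$ for $\sigma$). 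The map $\tilde\sigma$ is surjective by the very definition of $\Fg$. Using the identity $[vw,u] = \om(w,u)v + (-1)^{p(w)p(u)}\om(v,u)w$ from the proof of Lemma \ref{Weyl} and the centrality of $\om(v,w)$, one computes $\rho \circ \tilde\sigma = 2\sigma$.

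The problem thus reduces to showing that $\sigma$ is an isomorphism of super vector spaces for non-degenerate $\om$. This is the classical identification of the orthosymplectic Lie superalgebra with the super-symmetric square of its natural representation. It can be verified by (i) observing that $\om|_{V_0\times V_0}$ is skew-symmetric (forcing $s = \dim V_0$ to be even) and $\om|_{V_1\times V_1}$ is symmetric; (ii) choosing a homogeneous basis of $V$ in which $\om$ takes standard form, writing out $\sigma(e_i\cdot e_j)$ for all basis pairs, and recognizing the resulting matrices as a basis for the three blocks $\Fsp(s)$, $\Fo(r)$, and the odd off-diagonal part of $\Fosp(r|s)$; (iii) confirming the dimension match $\dim S^2(V) = \binom{s+1}{2} + sr + \binom{r}{2} = \dim \Fosp(r|s)$.

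Once $\sigma$ is known to be bijective, so is $\rho\circ\tilde\sigma = 2\sigma$. Combined with the surjectivity of $\tilde\sigma$, this forces both $\tilde\sigma$ and $\rho$ to be bijective, and since $\rho$ is already a Lie superalgebra homomorphism we conclude $\Fg\cong \Fosp(r|s)$. The only non-formal step is the bijectivity of $\sigma$, where the main effort is careful sign-bookkeeping in the super setting; no deeper obstacle arises.
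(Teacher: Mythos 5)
Your proof is correct. The paper gives no explicit proof of this corollary, treating it as immediate from Lemma \ref{Weyl}; your argument --- factoring the adjoint action through $S^2(V)$ via $\rho\circ\tilde\sigma=2\sigma$ and identifying $\sigma$ with the classical isomorphism $S^2(V)\cong\Fosp(r|s)$ by a basis/dimension count --- is the standard completion of exactly that implicit argument, and all the sign checks and the dimension identity $\binom{s+1}{2}+sr+\binom{r}{2}=\dim\Fosp(r|s)$ are right.
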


Let us assume that the $\omega$ is non-degenerate and both $r$ and $s$ are even. Set $r=2p$, $s=2q$ and $n=p+q$. Choose  basis $x_1,\dots,x_n,y_1,\dots,y_n$
in $V$ such that
$$\omega(x_i,x_j)=\omega(y_i,y_j)=0,\quad \omega(y_i,x_j)=\delta_{i,j}.$$
The parity is defined by 
$$p(x_i)=p(y_i)=\begin{cases} 1\,\,\text{if}\,\,i\leq p\\ 0\,\,\text{if}\,\,i> p\end{cases}.$$
In this case the Weyl algebra is isomorphic to $A^-_{q|p}$ since the defining relations are
$$x_ix_j-(-1)^{p(i)p(j)}x_jx_i=y_iy_j-(-1)^{p(i)p(j)}y_jy_i=0,$$
$$y_ix_j-(-1)^{p(i)p(j)}x_jy_i=\delta_{ij}.$$

Let $\Fg=\mathfrak{gl}(p|q)$, or $\mathfrak{osp}(2p|2q)$ and identify $\Z^m$ with the root lattice of $\Fg$ with basis 
consisting of the distinguished simple roots of $\Fg$. Let $\zeta:\Z^m \to \Z^n$ be the $\Z$-linear maps given 
by the matrices
\[
\begin{bmatrix}
1  &    &        &  \\
-1 & 1  &        & \\
   & -1 &        &  \\
   &    & \ddots & 1 \\
   &    &        & -1
\end{bmatrix},
\qquad
\begin{bmatrix}
1  &    &        & & \\
-1 & 1  &        & &\\
   & -1 &        &  &\\
   &    & \ddots & 1 &\\
   &    &        & -1 & 2
\end{bmatrix}
\]
respectively. Let $A^-_{q|p}$ be the Weyl superalgebra.

\begin{Theorem}\label{thmWeyl}
Let $p, q$ be nonnegative integers, $n=p+q>0$. Let $\Fg=\mathfrak{gl}(p|q)$, or $\mathfrak{osp}(2p|2q)$ and let $\zeta$ be as above. Then there is a 
commutative triangle of associative algebras with involution
\begin{equation} \label{eq:super-triangle}
\begin{tikzcd}[ampersand replacement=\&,
               column sep=small]
U\big(\mathfrak{g}\big) 
 \arrow[rr,"\pi"] 
 \arrow[rd,"\psi"'] \& \& A^-_{q|p} \\ 
\&  \mathcal{A}(\zeta)^-
 \ar[hookrightarrow,ur,"\varphi"'] \&
\end{tikzcd}
\end{equation}
where
$\varphi$ is given by Theorem \ref{thm:sufficient}, 
$\psi(e_i)=X_i$, $\psi(f_i)=Y_i$, $\psi(h_{ii})=\la_{ii}(u_i-1)$,
and
\begin{equation}
\pi(e_i)=
\begin{cases}
x_{i}\partial_{i+1}, & i<n,\\
x^2_n, & i=n,
\end{cases}\qquad
\pi(f_i)=\pi(e_i)^\ast,\;\pi(h_i)=x_i\partial_i+(-1)^{p(i)}\frac{1}{2},
\end{equation}
\end{Theorem}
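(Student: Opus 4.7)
The plan is to construct the triangle by producing the outer map $\pi$ first and then pulling it back through $\varphi$; injectivity of $\varphi$ then furnishes a unique $\psi$ making the diagram commute. Injectivity of $\varphi$ is essentially free: the two matrices displayed just before the theorem coincide with the matrix $\al$ and the $n\times n$ tridiagonal matrix ending in $(-1,2)$ from Example \ref{ex:inj-g}, both injective as $\Z$-linear maps $\Z^m\to\Z^n$. By the implication (iv)$\Rightarrow$(v) of Theorem \ref{thm:super_injectivity}, $\varphi$ is injective in both cases.

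For $\pi$ in the case $\Fg=\Fosp(2p|2q)$, I invoke Corollary \ref{corWeyl}: the span in $W(V,\omega)\cong A^-_{q|p}$ of all symmetrized products $vw+(-1)^{p(v)p(w)}wv$ is a Lie sub-superalgebra isomorphic to $\Fosp(2p|2q)$, and the universal property of $U(\Fg)$ extends this inclusion to an algebra map $\pi:U(\Fg)\to A^-_{q|p}$. A direct calculation in the Weyl superalgebra identifies the distinguished Chevalley generators with the stated formulas $x_i\partial_{i+1}$, $\partial_{i+1}x_i$ for $i<n$, $x_n^2$, $\partial_n^2$ for the last simple root, and $x_i\partial_i+(-1)^{p(i)}/2$ for the Cartan. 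The $\Fgl(p|q)$ case is obtained by restricting $\pi$ along the embedding $\Fgl(p|q)\hookrightarrow\Fosp(2p|2q)$ described earlier in the section.

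For commutativity, I match each $\pi$-image of a Chevalley generator with $\varphi$ applied to a specific element. Substituting $\zeta$ into \eqref{eq:phi} gives $\varphi(X_i)=x_i\partial_{i+1}=\pi(e_i)$ for $i<n$; in the $\Fosp$ case $\varphi(X_n)=\partial_{n-1}x_n^2$ agrees with $\pi(e_n^2)$, where $e_n^2$ denotes the single even Chevalley generator. Applying the involution handles the $Y_i$'s. For the Cartan, $\varphi(\la_{ii}(u_i-1))=\la_{ii}(\partial_ix_i-1)$ matches $\pi(h_{ii})$ after using the super-commutation $x_i\partial_i=\la_{ii}\partial_ix_i+\la_{ii}$ to reconcile the $(-1)^{p(i)}/2$ shift with the shift in $u_i-1$. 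Since $\mathrm{Im}(\pi)\subseteq\mathrm{Im}(\varphi)$ on a set of generators of $U(\Fg)$, hence on all of $U(\Fg)$, injectivity of $\varphi$ allows us to define $\psi:=\varphi^{-1}\circ\pi$; the triangle \eqref{eq:super-triangle} commutes by construction and $\psi$ takes the claimed values on the Chevalley generators.

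The principal obstacle is the sign-and-parity bookkeeping. One must translate between the two sections' conventions (the isomorphism $W(V,\omega)\cong A^-_{q|p}$ involves a reordering of basis and a parity swap), match the affine shift $(-1)^{p(i)}/2$ in $\pi(h_{ii})$ with the shift $u_i-1$ in $\psi(h_{ii})$, and carefully identify the last Chevalley generator $e_n^2$ of $\Fosp(2p|2q)$—a single symbol for an even element, not literally a square—with $\varphi(X_n)=\partial_{n-1}x_n^2\in\mathcal{A}(\zeta)^-$ rather than with the naive $x_n^2$. Once these identifications are pinned down, the rest is mechanical verification on generators followed by multiplicative extension.
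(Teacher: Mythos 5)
Your overall strategy coincides with the paper's: obtain $\pi$ from Corollary \ref{corWeyl}, deduce injectivity of $\varphi$ from Theorem \ref{thm:super_injectivity} via the injectivity of $\zeta$ (Example \ref{ex:inj-g}), observe that the $\pi$-images of the Chevalley generators lie in the image of $\varphi$, and set $\psi=\varphi^{-1}\circ\pi$. The paper additionally records why the defining ideal of the Kac--Moody presentation is annihilated (the image of the Cartan is self-centralizing); your route through Corollary \ref{corWeyl} together with a direct identification of the Chevalley generators inside $A^-_{q|p}$ accomplishes the same thing.

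There is, however, one concrete error in your commutativity check. You read the last column of the $\Fosp(2p|2q)$ matrix $\zeta$ as $-\Be_{n-1}+2\Be_n$, which gives $\varphi(X_n)=\partial_{n-1}x_n^2$. The intended last column is $2\Be_n$: in the displayed matrix the $-1$ in the bottom row belongs to column $n-1$, and this is forced in any case, since the $n$-th column of $\zeta$ must be the image of the last distinguished simple root $2\epsilon_n$ of $\Fosp(2p|2q)$ under the identification of the root lattice with $\Z^m$, matching the degree $2\Be_n$ of $\pi(e_n)=x_n^2$. With your reading the triangle does not commute at the last generator, since $\varphi(\psi(e_n))=\partial_{n-1}x_n^2\neq x_n^2=\pi(e_n)$, and your attempted repair --- matching $\partial_{n-1}x_n^2$ with ``$\pi(e_n^2)$'' --- does not work, because $e_n^2$ is merely the paper's name for the extra even Chevalley generator and its image under $\pi$ is still $x_n^2$. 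With the correct column the check is immediate: $\varphi(X_n)=x_n^{(2)}=x_n^2=\pi(e_n)$. The remainder of your verification ($\varphi(X_i)=x_i\partial_{i+1}$ for $i<n$, the involution handling the $f_i$, and the $\tfrac12$-shifts on the Cartan, which cancel in the combinations $[e_i,f_i]=h_i-(-1)^{\delta_{ip}}h_{i+1}$) is consistent with the paper.
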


\begin{proof}
First, the existence of $\pi$ follows from Corollary \ref{corWeyl}.  We need to check that $\tilde\pi(\mathfrak j)=0$. This follows immediately from the 
fact that  
$\tilde\pi(\mathfrak h)$ is the self-centralizing subalgebra of $\tilde\pi(\tilde{\mathfrak g})$.
Therefore we have a map $\tilde\pi:\mathfrak g\to A^-_{q|p}$ which extends to the homomorphism $\pi: U(\mathfrak g)\to A^-_{q|p}$ of associative algebras.
By Theorem \ref{thm:super_injectivity}, $\varphi$ is injective. Moreover, the image of $\varphi$  
coincides with the image of $\pi$. This immediately proves the existence of a unique map $\psi$ such that the diagram commutes.
\end{proof}

\subsection{Clifford superalgebra and $\mathfrak{osp}(2p+1|2q)$}
Let $V$ be a vector superspace equipped with even symmetric form $\beta:V\times V\to \K$. We define the Clifford superalgebra $\Cliff(V,\beta)$
as the quotient of the tensor superalgebra
$T(V)$ by the relations
$$v\otimes w+(-1)^{p(v)p(w)}w\otimes v=\beta(v,w).$$

Note that $\Cliff(V,\beta)$ is finite-dimensional iff $V$ is purely even. As any associative superalgebra  $\Cliff(V,\beta)$ has 
the associated Lie superalgebra
structure defined by $[x,y]=xy-(-1)^{p(x)p(y)}yx$.
Let $\Fg$ denote the Lie subalgebra of $\Cliff(V,\beta)$ generated by $V$.

\begin{Lemma}\label{cliff} We have the decomposition $\Fg=V\oplus[V,V]$ such that $[[V,V],V]\subset V$. As a vector space $[V,V]$ is isomorphic to 
$\Lambda^2V$ and concides 
with the span of $2vw-\beta(v,w)$ for all $v,w\in V$.
\end{Lemma}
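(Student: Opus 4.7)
The plan is to follow the template of Lemma \ref{Weyl}: first compute $[v,w]$ for $v,w\in V$ explicitly from the defining relation, then use the super-Leibniz rule to verify that $V+[V,V]$ is closed under the super-bracket, and finally invoke the super-PBW theorem for Clifford algebras to obtain the directness of the sum and the identification $[V,V]\cong\Lambda^2V$. From the defining relation $vw+(-1)^{p(v)p(w)}wv=\beta(v,w)$ one reads off $[v,w]=2vw-\beta(v,w)$, so $[V,V]$ is by definition the span of $\{2vw-\beta(v,w)\mid v,w\in V\}$, which settles the last assertion of the lemma.

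For closure, the key step is $[[V,V],V]\subseteq V$. Applying the super-Leibniz identity $[vw,u]=v[w,u]+(-1)^{p(w)p(u)}[v,u]w$, substituting $[w,u]=2wu-\beta(w,u)$ and $[v,u]=2vu-\beta(v,u)$, and collapsing $2vwu+2(-1)^{p(w)p(u)}vuw=2\beta(w,u)v$ by one more use of the Clifford relation, one obtains
\[[vw,u]=\beta(w,u)\,v-(-1)^{p(w)p(u)}\beta(v,u)\,w\in V.\]
Since $[v,w]$ differs from $2vw$ only by the scalar $\beta(v,w)$, this gives $[[V,V],V]\subseteq V$. Closure of $[[V,V],[V,V]]$ inside $V+[V,V]$ then follows from the super-Jacobi identity: the bracket $[[v_1,w_1],[v_2,w_2]]$ expands into a combination of iterated brackets $[[[v_1,w_1],v_2],w_2]$ and $[v_2,[[v_1,w_1],w_2]]$, each of which lies in $[V,V]$ by the previous step. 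Hence the Lie subalgebra $\Fg$ generated by $V$ equals $V+[V,V]$.

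For the remaining assertions, equip $\Cliff(V,\beta)$ with the filtration $F^k$ spanned by products of at most $k$ elements of $V$. The super-PBW theorem identifies the associated graded algebra of $\Cliff(V,\beta)$ with the super-exterior algebra $\Lambda V$, whose defining relation $v\wedge w+(-1)^{p(v)p(w)}w\wedge v=0$ is precisely what remains of the Clifford relation modulo $F^1$; in particular $F^2/F^1\cong\Lambda^2V$ and the class of $[v,w]=2vw-\beta(v,w)$ in $F^2/F^1$ is $2(v\wedge w)$. Hence if $x=\sum c_i[v_i,w_i]\in[V,V]$ lies in $F^1$ — in particular if $x\in V\cap[V,V]$ — then $\sum c_i v_i\wedge w_i=0$ in $\Lambda^2V$, so $\sum c_i v_i\otimes w_i$ is a sum of relators $v\otimes w+(-1)^{p(v)p(w)}w\otimes v$ in $V\otimes V$; applying the map $v\otimes w\mapsto[v,w]$ to each such relator yields zero by the super-skew-symmetry of the bracket, forcing $x=0$. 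This simultaneously gives $V\cap[V,V]=\{0\}$ and the injectivity (hence bijectivity) of the canonical map $\Lambda^2V\to[V,V]$, $v\wedge w\mapsto[v,w]$. The only nontrivial technical input is the super-PBW identification $F^2/F^1\cong\Lambda^2V$; granted that, the remainder is a direct super-analogue of the classical Clifford computation, paralleling the proof of Lemma \ref{Weyl}.
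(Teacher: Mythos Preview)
Your argument is correct and its computational core --- deriving $[v,w]=2vw-\beta(v,w)$ from the defining relation, then showing $[[V,V],V]\subseteq V$ via the super-Leibniz rule and closing $[V,V]$ under the bracket by Jacobi --- is exactly what the paper does. The paper's proof in fact stops there, leaving the directness of $V\oplus[V,V]$ and the identification $[V,V]\cong\Lambda^2V$ unargued; your appeal to the filtration and the super-PBW isomorphism $F^2/F^1\cong\Lambda^2V$ is the natural way to supply these missing verifications, so your write-up is, if anything, more complete.
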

\begin{proof} First, we compute the commutator 
$$[v,w]=vw-(-1)^{p(v)p(w)}wv=2vw-\beta(v,w).$$
Next we compute the commutator between $[v,w]$ and $u$ using super Leibniz identity
$$[u,[v,w]]=2[u,vw]=2([u,v]w+(-1)^{p(u)p(v)}v[u,w])=$$
$$2(2uvw-\beta(u,v)w+(-1)^{p(u)p(v)}2vuw-(-1)^{p(u)p(v)}\beta(u,w)v).$$
Using $vu=-(-1)^{p(u)p(v)}uv+\beta(v,u)$ and the symmetry of $\beta$ we obtain
$$[u,[v,w]]=2(\beta(u,v)w-(-1)^{p(u)p(v)}\beta(u,w)v).$$
Hence we have obtained $[[V,V],V]\subset V$ and by Jacobi identity $[[V,V],[V,V]]\subset [V,V]$.
\end{proof}

We concentrate on the case when $\beta$ is non-degenerate and $\dim V=(2p|2q)$, let $n=p+q$  and choose a basis $\xi_1,\dots,\xi_n,\eta_1,\dots,\eta_n$ such that
$$\beta(\xi_i,\xi_j)=\beta(\eta_i,\eta_j)=0,\quad \beta(\eta_i,\xi_j)=\delta_{i,j}.$$
The parity is defined by 
$$p(\xi_i)=p(\eta_i)=\begin{cases} 0\,\,\text{if}\,\,i\leq p\\ 1\,\,\text{if}\,\,i> p\end{cases}.$$
The corresponding Clifford superalgebra is isomorphic to $A^+_{p|q}$.
The defining relations are
$$\xi_i\xi_j+(-1)^{p(i)p(j)}\xi_j\xi_i=\eta_i\eta_j+(-1)^{p(i)p(j)}\eta_j\eta_i=0,$$
$$\eta_i\xi_j+(-1)^{p(i)p(j)}\xi_j\eta_i=\delta_{ij}.$$

\begin{Lemma} The Lie subsuperalgebra of $A^+_{p|q}$ generated by $\xi_i,\eta_i$ for $i=1,\dots,n$ is isomorphic to $\mathfrak{osp}(2p+1|2q)$.
\end{Lemma}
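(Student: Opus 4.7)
The plan is to invoke Lemma \ref{cliff} to decompose $\Fg=V\oplus[V,V]$, establish the Clifford analog of Corollary \ref{corWeyl} identifying $[V,V]\cong\mathfrak{osp}(V,\beta)=\mathfrak{osp}(2p|2q)$, and then recognize the full $\Fg$ as $\mathfrak{osp}(W,\tilde\beta)$ where $W=V\oplus\K e_0$ is the one-dimensional even extension of $V$ with $\tilde\beta(e_0,e_0)=1$ and $\tilde\beta(e_0,V)=0$. Under this identification, $V\subset\Fg$ corresponds to the subspace of $\mathfrak{osp}(W,\tilde\beta)$ spanned by the endomorphisms $\rho(v)$ with $\rho(v)e_0=v$ and $\rho(v)u=-\tilde\beta(v,u)e_0$ for $u\in V$, while $[V,V]\subset\Fg$ corresponds to the subalgebra $\mathfrak{osp}(V,\beta)\subset\mathfrak{osp}(W,\tilde\beta)$ annihilating $e_0$.

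For the second step, Lemma \ref{cliff} shows $[V,V]$ acts on $V$ preserving $\beta$ via $[u,[v,w]]=2(\beta(u,v)w-(-1)^{p(u)p(v)}\beta(u,w)v)$, yielding a Lie superalgebra homomorphism $\phi:[V,V]\to\mathfrak{osp}(V,\beta)$. Surjectivity follows because rank-$\leq 2$ operators of this form span $\mathfrak{osp}(V,\beta)$; injectivity is forced by the dimension count $\dim[V,V]=\binom{2p}{2}+4pq+\binom{2q+1}{2}=\dim\mathfrak{osp}(2p|2q)$, so $\phi$ is an isomorphism.

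For the final step, I would define $\Psi:\mathfrak{osp}(W,\tilde\beta)\to\Fg$ by $\rho(v)\mapsto v$ and extend by $\phi^{-1}$ rescaled by an appropriate constant on the subalgebra $\mathfrak{osp}(V,\beta)$. The main obstacle is verifying that $\Psi$ preserves the super-bracket; the nontrivial case is checking that $[\rho(v),\rho(w)]$ acting on $u\in V$ matches the formula from Lemma \ref{cliff} for $[v,w]\in[V,V]$ acting on $u$, up to the chosen rescaling---a sign-heavy but routine super-symmetry computation using $\tilde\beta(a,b)=(-1)^{p(a)p(b)}\tilde\beta(b,a)$ throughout. Once $\Psi$ is verified to be a Lie superalgebra homomorphism, simplicity of $\mathfrak{osp}(2p+1|2q)$ for $(p,q)\neq(0,0)$ (the case $p=q=0$ being vacuous) forces injectivity, and since $\Psi(\mathfrak{osp}(W,\tilde\beta))$ contains both $V$ and $[V,V]$, it equals $\Fg$ by Lemma \ref{cliff}, giving the desired isomorphism $\mathfrak{osp}(2p+1|2q)\cong\Fg$.
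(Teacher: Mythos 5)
Your proposal is correct in outline and agrees with the paper on the first half: both identify $[V,V]$ with $\mathfrak{osp}(2p|2q)$ via the adjoint action on $V$ computed in Lemma \ref{cliff} (the paper is terser, simply noting that $\beta$ is invariant; your surjectivity-plus-dimension-count argument fills in what the paper leaves implicit). Where you diverge is the final identification of $\Fg=V\oplus[V,V]$ with $\mathfrak{osp}(2p+1|2q)$. The paper uses a recognition argument: $V\oplus[V,V]$ is ``obviously simple,'' and a simple superalgebra containing $\mathfrak{osp}(2p|2q)$ with the natural module as complement must be $\mathfrak{osp}(2p+1|2q)$. You instead construct the isomorphism explicitly by extending the quadratic space to $W=V\oplus\K e_0$ and realizing $V$ as the off-diagonal block $\rho(V)\subset\mathfrak{osp}(W,\tilde\beta)$, using simplicity only to get injectivity (a dimension count would also do). Your route is longer but more self-contained and makes the isomorphism explicit; the paper's is shorter but leans on the known block structure $\mathfrak{osp}(2p+1|2q)=\mathfrak{osp}(2p|2q)\oplus V$.

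One concrete caveat in your construction: rescaling \emph{only} on the subalgebra $\mathfrak{osp}(V,\beta)$ cannot work. Writing $\phi(x)=\ad_x|_V$, one computes $[\rho(v),\rho(w)]|_V=-\tfrac12\,\phi([v,w])$ while $[T',\rho(v)]=\rho(Tv)$ for $T\in\mathfrak{osp}(V,\beta)$; the first bracket forces the scaling constant on the subalgebra to be $-2$ and the second forces it to be $1$. The fix is to rescale the odd block as well, e.g. send $v\mapsto \sqrt{-2}\,\rho(v)$ (possible since $\K$ is algebraically closed), after which both bracket relations are satisfied simultaneously. With that adjustment the argument goes through.
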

\begin{proof} In notations of Lemma \ref{cliff}, consider the adjoint action of $[V,V]$ on $V$. The Leibniz rule implies that
the form $\beta$ is invariant under this action. Hence $[V,V]$ is isomorphic to $\mathfrak{osp}(2p,2q)$ and $V$ is its natural representation.
Since obviously $V\oplus[V,V]$ is simple, it must be isomorphic to  $\mathfrak{osp}(2p+1|2q)$.
\end{proof}

\begin{Corollary} There exist homomorphisms of associative superalgebras $\pi_1:U(\mathfrak{osp}(2p|2q))\to A^+_{p|q}$ and
$\pi_2:U(\mathfrak{osp}(2p+1|2q))\to A^+_{p|q}$. 
\end{Corollary}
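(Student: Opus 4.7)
The plan is to obtain both homomorphisms as direct consequences of the preceding Lemma via the universal property of the universal enveloping algebra: any homomorphism of Lie superalgebras $\mathfrak{l}\to B$, where $B$ is an associative superalgebra equipped with its supercommutator bracket, extends uniquely to a homomorphism $U(\mathfrak{l})\to B$ of associative superalgebras.

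For $\pi_2$, I would simply invoke the preceding Lemma, which identifies $\mathfrak{osp}(2p+1|2q)$ with the Lie subsuperalgebra of $A^+_{p|q}$ generated by $\xi_1,\ldots,\xi_n,\eta_1,\ldots,\eta_n$. Composing this identification with the inclusion into $A^+_{p|q}$ yields a Lie superalgebra homomorphism $\mathfrak{osp}(2p+1|2q)\to A^+_{p|q}$, which extends uniquely to the associative algebra homomorphism $\pi_2$.

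For $\pi_1$, there are two natural routes, both of which I would mention. The first uses the chain of embeddings $\mathfrak{osp}(2p|2q)\subset\mathfrak{osp}(2p+1|2q)$ recalled in the introduction to Section \ref{sec:Lie-superalgebras}: this induces $U(\mathfrak{osp}(2p|2q))\hookrightarrow U(\mathfrak{osp}(2p+1|2q))$, and I take $\pi_1$ to be the restriction of $\pi_2$. The second route is more intrinsic: by Lemma \ref{cliff} the subspace $[V,V]\subset A^+_{p|q}$ is closed under the supercommutator, and since the induced action of $[V,V]$ on $V$ preserves the non-degenerate even symmetric form $\beta$ of type $(2p|2q)$, one identifies $[V,V]\cong\mathfrak{osp}(2p|2q)$ (exactly as in Corollary \ref{corWeyl} in the symmetric setting), giving the desired Lie superalgebra map to extend.

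There is essentially no obstacle here beyond invoking the universal property, since all substantive content has been handled by Lemma \ref{cliff} and the Lemma immediately preceding the Corollary. The only minor point worth verifying is that the two candidate constructions of $\pi_1$ coincide, which amounts to matching the abstract embedding $\mathfrak{osp}(2p|2q)\subset\mathfrak{osp}(2p+1|2q)$ with the inclusion $[V,V]\subset V\oplus[V,V]$ inside $A^+_{p|q}$; both single out the even subalgebra stabilizing $\beta$ on the even part of $V$, so they agree.
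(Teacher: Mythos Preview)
Your proposal is correct and is exactly the argument the paper intends: the Corollary is stated without proof precisely because it follows immediately from the preceding Lemma (and the identification $[V,V]\cong\mathfrak{osp}(2p|2q)$ established in its proof) via the universal property of $U(\mathfrak{g})$. One small slip in your closing sentence: $[V,V]$ is not the ``even subalgebra'' and it preserves $\beta$ on all of $V$, not just its even part; but this does not affect the argument.
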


Let $q\neq 0$.
Let us assume that $e_1,\dots,e_n$ and $f_1,\dots f_n$ are the Chevalley generators of $\mathfrak{osp}(2p+1|2q)$ such that $e_p,f_p,e_n,f_n$ are odd and
all other generators are even. Then we have
$$\pi_2(e_i)=\begin{cases} \xi_i\eta_{i+1} & \text{if $i<n$}\\ \xi_n &\text{if $i=n$}\end{cases}\qquad
\pi_2(f_i)=\begin{cases} \xi_{i+1}\eta_{i} &\text{if $i<n$}\\ \eta_n &\text{if $i=n$}\end{cases},$$
and $\pi_1$ is obtained from $\pi_2$ by restriction.

Let $\Fg=\mathfrak{gl}(p|q)$, $\mathfrak{osp}(2p|2q)$ or $\mathfrak{osp}(2p+1|2q)$ and identify $\Z^m$ with the root lattice of $\Fg$ with basis 
consisting of the distinguished simple roots of $\Fg$. Let $\zeta:\Z^m \to \Z^n$ be the $\Z$-linear maps given 
by the matrices
\begin{equation}\label{eq:matrices}
\begin{bmatrix}
1  &    &        &  \\
-1 & 1  &        & \\
   & -1 &        &  \\
   &    & \ddots & 1 \\
   &    &        & -1
\end{bmatrix},
\qquad
\begin{bmatrix}
1  &    &        & & \\
-1 & 1  &        & &\\
   & -1 &        &  &\\
   &    & \ddots & 1 &\\
   &    &        & -1 & 2
\end{bmatrix},
\qquad
\begin{bmatrix}
1  &    &        & & \\
-1 & 1  &        & &\\
   & -1 &        &  &\\
   &    & \ddots & 1 &\\
   &    &        & -1 & 1
\end{bmatrix}
\end{equation}
respectively. Let $A^+_{p|q}=A_I$ be the Weyl algebra with index superset $I$, $I_{\bar 0}=\iv{1}{p}, I_{\bar 1}=\iv{p+1}{p+q}$.

\begin{Theorem}\label{thmClifford}
Let $p, q$ be nonnegative integers, $n=p+q>0$. Let $\Fg=\mathfrak{gl}(p|q)$, $\mathfrak{osp}(2p|2q)$ or $\mathfrak{osp}(2p+1|2q)$.
Then there is a 
commutative triangle of associative algebras with involution
\begin{equation} \label{eq:super-triangle}
\begin{tikzcd}[ampersand replacement=\&,
               column sep=small]
U\big(\mathfrak{g}\big) 
 \arrow[rr,"\pi"] 
 \arrow[rd,"\psi"'] \& \& A^+_{p|q} \\ 
\&  \mathcal{A}(\zeta)^+
 \ar[hookrightarrow,ur,"\varphi"'] \&
\end{tikzcd}
\end{equation}
where
$\varphi$ is given by Theorem \ref{thm:sufficient}, 
$\psi(e_i)=X_i$, $\psi(f_i)=Y_i$, $\psi(h_{ii})=\la_{ii}(u_i-1)$,
and
\begin{equation}
\pi(e_i)=
\begin{cases}
x_i\partial_{i+1}, & i<n,\\
x_n, & i=n,\; \mathfrak{g}=\mathfrak{osp}(2q+1|2p),\\
x^2_n, & i=n,\; \mathfrak{g}=\mathfrak{osp}(2q|2p),
\end{cases}\qquad
\pi(f_i)=\pi(e_i)^\ast,\;\pi(h_i)=x_i\partial_i-(-1)^{p(i)}\frac{1}{2},
\end{equation}
\end{Theorem}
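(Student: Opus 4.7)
The proof follows the same template as Theorem \ref{thmWeyl}. First, for $\Fg=\Fosp(2p+1|2q)$, the homomorphism $\pi:U(\Fg)\to A^+_{p|q}$ is provided by the corollary immediately preceding the statement, which identifies $\Fosp(2p+1|2q)$ with the Lie subsuperalgebra of the Clifford superalgebra generated by its natural module $V$. The cases $\Fg=\Fosp(2p|2q)$ and $\Fg=\Fgl(p|q)$ are then obtained by restriction along the chain of embeddings $\Fgl(p|q)\subset\Fosp(2p|2q)\subset\Fosp(2p+1|2q)$ recalled at the start of Section \ref{sec:Lie-superalgebras}. In each case one verifies the claimed formulas for $\pi(e_i),\pi(f_i),\pi(h_i)$ by direct computation using the defining relations of $A^+_{p|q}$; as in the Weyl case, the initial map from the Chevalley--Serre presentation descends to $U(\Fg)$ because the image of the Cartan is self-centralizing inside the image of $\Fg$.

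Second, we apply Theorem \ref{thm:sufficient} to the matrix $\zeta$ in each case. The hypotheses are easy to verify from the bidiagonal structure of the matrices in \eqref{eq:matrices}. Condition (i) holds because the only entry of absolute value greater than $1$ is the bottom-right entry $2$ in the $\Fosp(2p|2q)$ matrix, which lies in row $n>p$ where $\la_{nn}=1$, not $-1$. Condition (ii) holds because any two distinct columns of $\zeta$ meet in at most one row, with opposite signs there. Theorem \ref{thm:sufficient} thus yields $\mathcal{A}(\zeta)^+$ together with the $R$-ring-with-involution homomorphism $\varphi$. Since, by Example \ref{ex:inj-g}, each $\zeta$ defines an injective $\Z$-linear map $\Z^m\to\Z^n$, Theorem \ref{thm:super_injectivity}(iv)$\Rightarrow$(v) gives that $\varphi$ is injective.

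It remains to construct $\psi$ and check commutativity. A column-by-column comparison of \eqref{eq:phi} with the explicit formulas for $\pi(e_i)$ shows that $\pi(e_i)=\varphi(X_i)$ in each of the three cases (for $i<n$ the column of $\zeta$ is $(\ldots,1,-1,\ldots)^T$, giving $\varphi(X_i)=x_i\partial_{i+1}$; for $i=n$ in the orthosymplectic cases, the column is $(0,\ldots,0,1)^T$ or $(0,\ldots,0,2)^T$, giving $x_n$ or $x_n^2$). Applying the involution yields $\pi(f_i)=\varphi(Y_i)$, and $\pi(h_i)$ lies in the image of $R=R_{p|q}^+$ under $\varphi$, since $\varphi(u_j)=\partial_j x_j$. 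Hence $\mathrm{Im}(\pi)\subseteq\mathrm{Im}(\varphi)$, and combined with the injectivity of $\varphi$ this produces a unique homomorphism $\psi:U(\Fg)\to \mathcal{A}(\zeta)^+$ with $\varphi\circ\psi=\pi$; reading off the values on generators yields the stated formulas.

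I expect the main obstacle to be the first step, specifically the passage from the Chevalley--Serre presentation of the Kac--Moody superalgebra to its quotient $U(\Fg)$. This reduces to super-commutator calculations of the type already performed in Lemma \ref{cliff} and its corollary, together with the verification that the Cartan image is self-centralizing in the image of $\Fg$. Once these computations are in place, the remaining steps are formal: injectivity of $\varphi$ is handed to us by Theorem \ref{thm:super_injectivity} applied to the very concrete matrices in \eqref{eq:matrices}, and the construction of $\psi$ follows automatically from the containment of images.
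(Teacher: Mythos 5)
Your proposal is correct and follows essentially the same route as the paper, which simply declares the proof ``similar to Theorem \ref{thmWeyl}'': existence of $\pi$ from the Clifford-algebra corollary plus the self-centralizing Cartan argument, injectivity of $\varphi$ from Theorem \ref{thm:super_injectivity} applied to the explicit matrices, and $\psi$ obtained from the equality of images. Your added verifications of conditions (i)--(ii) of Theorem \ref{thm:sufficient} and the column-by-column matching are exactly the details the paper leaves to the reader (note only that your check of condition (i) for the $\mathfrak{osp}(2p|2q)$ matrix uses $q>0$, consistent with the paper's standing assumption $q\neq 0$ there).
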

The proof is similar to Theorem \ref{thmWeyl} and we leave it to the reader.

\subsection{On $A_{p|q}^+$ versus $A_{q|p}^-$}

If we disregard $\mathbb Z_2$-grading, then we have an isomorphisms of associative algebras $A^{\pm}_{p|0}\simeq A^{\mp}_{0|p}$.
We suspect that $A^+_{p|q}$ and $A^-_{q|p}$ are not isomorphic in general. Note also that
$A^-_{p|q}$ is isomorphic to the tensor product $M_{2^q}\otimes(A^-_{p|0})$, while $A^+_{q|p}$ is isomorphic to the supertensor product
$M_{2^q}\otimes(A^+_{0|p})$. However, we do have the following result.

\begin{Corollary}\label{corisom} Consider the sublattice $$\Gamma=\{(a_1,\dots,a_n)\,|\,a_1+\dots+a_n\in 2\Z\}$$ 
in $\Z^n$. Let $C^\pm_{p|q}$ denote the subsuperalgebra of elements of $A^\pm_{p|q}$ with the support in $\Gamma$.
Then $C^+_{p|q}$ and $C^-_{q|p}$ are isomorphic superalgebras. 
\end{Corollary}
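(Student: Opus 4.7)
The strategy is to identify $C^+_{p|q}$ and $C^-_{q|p}$ as two multiplicative structures on a common $\Z^p\oplus\Z^q$-graded vector space, and then to explicitly untwist one into the other by a sign involving $\sqrt{-1}$. Applying the decompositions recalled in the paragraph preceding the corollary, one has $A^+_{p|q}\cong M_{2^p}\otimes_s A_q$ (supertensor product, using the intrinsic Clifford $\Z/2\Z$-grading on $M_{2^p}=\Cliff(2p)$ and the intrinsic length-parity $\Z/2\Z$-grading on the $q$-th Weyl algebra $A_q$) and $A^-_{q|p}\cong A_q\otimes M_{2^p}$ (ordinary tensor product). After swapping the order of tensor factors in the second, both $A^\pm$ are realized on the same underlying $\Z^p\oplus\Z^q$-graded vector space $M_{2^p}\otimes A_q$, and the only difference lies in the multiplication rule.

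The key observation concerns the two $\Z/2\Z$-gradings on the $\Gamma$-supported subspace. A bihomogeneous element $\alpha\otimes a$ of bidegree $(d,e)\in\Z^p\oplus\Z^q$ lies in the $\Gamma$-subspace precisely when the intrinsic Clifford parity $|\alpha|=\sum d_i\bmod 2$ equals the intrinsic Weyl parity $|a|=\sum e_j\bmod 2$. On this subspace the $A^+_{p|q}$-superalgebra parity, inherited from the $A_q$-factor (whose generators are $A^+$-odd), equals $|a|$, while the $A^-_{q|p}$-parity, inherited from the $M_{2^p}$-factor (whose generators are $A^-$-odd), equals $|\alpha|$; the $\Gamma$-condition forces these two inherited parities to coincide. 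Moreover, on this subspace the supertensor sign $(-1)^{|a||\alpha'|}$ appearing in the $A^+$-multiplication simplifies to $(-1)^{|\alpha||\alpha'|}$.

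The isomorphism is then given by
\[
\Phi:C^+_{p|q}\longrightarrow C^-_{q|p},\qquad \Phi(\alpha\otimes a)=i^{|\alpha|}\,(\alpha\otimes a),
\]
defined on bihomogeneous elements and extended $\K$-linearly (using $i\in\K$, available since $\K$ is algebraically closed of characteristic zero). A direct case check for $(|\alpha|,|\alpha'|)\in\{0,1\}^2$ verifies the elementary identity
\[
(-1)^{|\alpha||\alpha'|}\,i^{(|\alpha|+|\alpha'|)\bmod 2}=i^{|\alpha|+|\alpha'|},
\]
which is exactly the condition needed for $\Phi$ to transform the supertensor multiplication of $C^+_{p|q}$ into the ordinary multiplication of $C^-_{q|p}$. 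Hence $\Phi$ is an algebra homomorphism; it is a bijection because it acts by the nonzero scalar $i^{|d|}\in\{1,i\}$ on each bigraded component; and it preserves the $\Z/2\Z$-grading since it preserves bidegree and the two inherited gradings coincide on $\Gamma$. The main technical subtlety is the careful bookkeeping of the two distinct pairs of parities living on $M_{2^p}\otimes A_q$---the intrinsic Clifford/Weyl ones and those inherited from $A^\pm_{p|q}$---and the verification that on $\Gamma$ these align in precisely the way needed for the sign twist to collapse.
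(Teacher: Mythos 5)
Your proof is correct, but it is a genuinely different argument from the one in the paper. The paper proceeds representation-theoretically: by Theorems \ref{thmWeyl} and \ref{thmClifford}, both $C^-_{q|p}$ and $C^+_{p|q}$ are images of $U(\Fosp(2p|2q))$ under the oscillator/spinor maps $\pi$; one then checks that the cyclic submodules $N^\pm$ generated by the vacuum in the Fock modules $M^\pm$ are simple lowest weight modules with the same lowest weight, hence isomorphic, hence have the same annihilator $J$, so that $C^+_{p|q}\simeq U(\Fosp(2p|2q))/J\simeq C^-_{q|p}$. You instead untwist the Koszul sign directly: after identifying both algebras with the $\Z^p\oplus\Z^q$-graded space $\Cliff(2p)\otimes A_q$, the only discrepancy between the two multiplications is the sign $(-1)^{|a||\alpha'|}$, and on $\Gamma$ (where the Clifford length-parity $|\alpha|$ and Weyl length-parity $|a|$ coincide) the rescaling $\alpha\otimes a\mapsto i^{|\alpha|}(\alpha\otimes a)$ absorbs it; your four-case verification of $(-1)^{|\alpha||\alpha'|}i^{(|\alpha|+|\alpha'|)\bmod 2}=i^{|\alpha|+|\alpha'|}$ is exactly right, as is the observation that the two inherited superalgebra parities agree on $\Gamma$, so the map is even. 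Your argument buys explicitness and self-containedness: it produces the isomorphism concretely, and it does not rely on the identification of $C^\pm$ with the images of $\pi$ (which the paper asserts from the formulas) nor on the simplicity of $N^\pm$. The paper's argument buys the additional structural information that $C^\pm_{p|q}$ is a primitive quotient of $U(\Fosp(2p|2q))$, which is the point of the surrounding section. Both are valid; yours is arguably the cleaner proof of the bare statement.
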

\begin{proof} Theorem \ref{thmWeyl} and Theorem \ref{thmClifford} provide the homomorphisms from
$U(\mathfrak{osp}(2p|2q))$ to $A^-_{q|p}$ and $A^+_{p|q}$ respectively. It follows from formulas defining these isomorphisms that
$C^-_{q|p}$ and $C^+_{p|q}$ are respective images. Consider the modules
$$M^-:=A^-_{q|p}\otimes_{\K[\partial_1,\dots,\partial_n]}\K,\quad M^+:=A^+_{p|q}\otimes_{\K[\eta_1,\dots,\eta_n]}\K,$$
and let $$N^-=C^-_{q|p}(1\otimes 1),\quad N^+=C^+_{p|q}(1\otimes 1).$$
Note that $N^\pm$ is a simple module over $C^+_{p|q}$ and $C^-_{q|p}$, respectively, hence both $N^+$ and $N^-$ are simple $U(\mathfrak{osp}(2p|2q))$-modules.
Furthemore if $v=1\otimes 1$, then $$f_iv=0,\quad h_iv=-(-1)^{p(i)}v.$$ Thus both $N^+$ and $N^-$ are simple lowest weight modules with the same 
lowest weight. Thus, $N^+$ and $N^-$ are isomorphic, therefore they have the same annihilator $J\subset U(\mathfrak{osp}(2p|2q))$ and we obtain
$$C^+_{p|q}\simeq  U(\mathfrak{osp}(2p|2q)/J\simeq C^-_{q|p}.$$
\end{proof}

\subsection{Consequence for classical Lie algebras}

Taking $q=0$ in Theorem \ref{thmClifford} we immediately get the following result.

\begin{Corollary}\label{cor:oddcases} 
For $\Fg=\Fsl_n, \mathfrak{so}_{2n+1}$, or $\mathfrak{so}_{2n}$, there is a corresponding $\ga$ and a commutative triangle of associative algebras with involution
\begin{equation} \label{eq:odd-triangle}
\begin{tikzcd}[ampersand replacement=\&,
               column sep=small]
U\big(\mathfrak{g}\big) 
 \arrow[rr,"\pi"] 
 \arrow[rd,"\psi"'] \& \& A^+_{n|0} \\ 
\&  \mathcal{A}(\ga)^+
 \ar[hookrightarrow,ur,"\varphi"'] \&
\end{tikzcd}
\end{equation}
\end{Corollary}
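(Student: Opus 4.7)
The plan is to derive the corollary as a direct specialization of Theorem \ref{thmClifford} to $q=0$, $p=n$. Under this substitution the three superalgebras degenerate to classical Lie algebras: $\mathfrak{gl}(n|0)\cong \mathfrak{gl}_n$, $\mathfrak{osp}(2n|0)\cong \mathfrak{so}_{2n}$, and $\mathfrak{osp}(2n+1|0)\cong \mathfrak{so}_{2n+1}$. Simultaneously, when $q=0$ every generator of $A^+_{p|q}$ is even by definition, so $A^+_{n|0}$ reduces to the standard Clifford algebra of rank $2n$. The three matrices displayed in \eqref{eq:matrices} then specialize to the $\ga$ promised by the corollary in each case, and they satisfy the hypotheses of Theorem \ref{thm:sufficient} (they are, up to sign conventions, the standard Cartan-type patterns for types $A_{n-1}$, $D_n$, and $B_n$).

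For $\mathfrak{g}=\mathfrak{so}_{2n}$ and $\mathfrak{g}=\mathfrak{so}_{2n+1}$ the commutative triangle \eqref{eq:odd-triangle} is then literally the one supplied by Theorem \ref{thmClifford}. Injectivity of $\varphi$ follows from Example \ref{ex:inj-g} combined with the implication (iv)$\Rightarrow$(v) of Theorem \ref{thm:super_injectivity}, since the corresponding matrices $\be$ and $\ga$ there have trivial kernel as $\Z$-linear maps. For $\mathfrak{g}=\mathfrak{sl}_n$, Theorem \ref{thmClifford} directly yields a triangle for $U(\mathfrak{gl}_n)$; I would obtain the $\mathfrak{sl}_n$ version by restricting both $\pi$ and $\psi$ along the inclusion $U(\mathfrak{sl}_n)\hookrightarrow U(\mathfrak{gl}_n)$. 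This restriction is well-defined because $\mathfrak{sl}_n$ is generated as a Lie algebra by the Chevalley generators $e_i,f_i$ for $1\le i\le n-1$, which are precisely the elements on which $\pi$ and $\psi$ are defined by the formulas of Theorem \ref{thmClifford}; the restricted diagram inherits commutativity. Injectivity of $\varphi$ in this case is the content of Example \ref{ex:inj-g} applied to the matrix $\al$.

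Since every step of this argument is a mechanical substitution or restriction, there is no substantive obstacle. The only verification worth carrying out explicitly is the bookkeeping check that, under the standard identifications of $\mathfrak{so}_{2n}$ and $\mathfrak{so}_{2n+1}$ with the corresponding $\mathfrak{osp}$-algebras in the $q=0$ limit, the formulas for $\pi(e_i)$ and $\pi(f_i)$ given in Theorem \ref{thmClifford} recover the familiar spinor realizations of these classical Lie algebras inside the Clifford algebra $A^+_{n|0}$.
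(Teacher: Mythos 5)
Your proposal is correct and follows the paper's own route exactly: the paper's entire proof is the one-line observation that the corollary is Theorem \ref{thmClifford} specialized to $q=0$ (so that $\mathfrak{osp}(2n|0)\cong\mathfrak{so}_{2n}$, $\mathfrak{osp}(2n+1|0)\cong\mathfrak{so}_{2n+1}$, and $A^+_{n|0}$ is the ordinary Clifford algebra), with injectivity of $\varphi$ already supplied by Example \ref{ex:inj-g} and Theorem \ref{thm:super_injectivity}. Your extra care in passing from $\mathfrak{gl}_n$ to $\mathfrak{sl}_n$ by restriction along the Chevalley generators is a detail the paper glosses over, but it is the intended reading.
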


We can now prove that further primitive quotients of enveloping algebras of classical Lie algebras are examples of TGWAs. 
This extends previous results by the authors \cite{HarSer2016}, were a condition for $U(\Fg)/J$ to be a not-necessarily abelian TGW algebra (i.e. we allowed 
$\si_i\si_j\neq \si_j\si_i$) was given.

\begin{Theorem} If $\mathfrak g=\mathfrak{so}_{2n},\mathfrak{so}_{2n+1}$ or $\mathfrak{sp}_{2n}$ and $M$
be a finite-dimensional completely pointed simple $\mathfrak g$-module and let $J=\Ann_{U(\Fg)}M$. Then $U(\Fg)/J$ is graded isomorphic to a TGWA of the form $\mathcal{A}(\ga)^+$.
The same is true for any fundamental representation of $\mathfrak{sl}_n$.
\end{Theorem}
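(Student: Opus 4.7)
The plan is to start from the commutative triangle
\[
U(\Fg)\xrightarrow{\psi}\mathcal{A}(\ga)^+\xrightarrow{\varphi}A^+_{p|q},\qquad \pi=\varphi\circ\psi,
\]
produced by Corollary~\ref{cor:oddcases} for $\Fg=\Fsl_n$ and by Theorem~\ref{thmClifford} for $\Fg=\mathfrak{so}_{2n},\mathfrak{so}_{2n+1}$ and $\Fg=\mathfrak{sp}_{2n}=\mathfrak{osp}(0|2n)$. In each case, $\varphi$ is injective by Example~\ref{ex:inj-g} applied to the corresponding matrix, and the argument in the proof of Theorem~\ref{thmClifford} that $\operatorname{im}\varphi=\operatorname{im}\pi$, combined with the injectivity of $\varphi$, forces $\psi$ to be surjective. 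Consequently $\ker\psi=\ker\pi$, and $U(\Fg)/\ker\pi\simeq\mathcal{A}(\ga)^+$ as $\Z^m$-graded algebras, the grading on the left being the root-lattice grading transported through $\psi(e_i)=X_i$, $\psi(f_i)=Y_i$. Hence the remaining task is to show $\ker\pi=J:=\Ann_{U(\Fg)}(M)$.

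\medskip

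\noindent\textbf{Identifying $\ker\pi$ with $J$.}
For $\Fg=\mathfrak{so}_{2n+1}$ and $M$ the spinor, $A^+_{n|0}\simeq M_{2^n}(\K)$ is simple and acts faithfully on its unique simple module $S\simeq M$; Jacobson density gives $\pi(U(\Fg))=\End(M)$, whence $\ker\pi=\Ann M$. For $\Fg=\mathfrak{so}_{2n}$ with $M=S^\pm$ a half-spinor, $\operatorname{im}\pi$ lies in the even Clifford subalgebra $\Cliff^{+}\simeq\End(S^+)\oplus\End(S^-)$, and one composes $\psi$ with the projection onto the idempotent component of $\mathcal{A}(\ga)^+$ selecting $S^\pm$; the resulting quotient is again a TGWA of the form $\mathcal{A}(\ga')^+$ whose kernel in $U(\Fg)$ is precisely $\Ann S^\pm$. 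For $\Fg=\Fsl_n$, the spinor module of $A^+_{n|0}$ decomposes under $\Fsl_n$ as $\bigoplus_{k=0}^{n}\Lambda^k\K^n$ (graded by total exterior degree), and the same projection procedure isolates each fundamental $\Lambda^k\K^n$. Finally, for $\Fg=\mathfrak{sp}_{2n}$ and $M=\K^{2n}$ the natural representation, the oscillator triangle does not target $M$; instead one transfers structure via $\mathfrak{sp}_{2n}\subset\Fsl_{2n}$, using that $U(\Fsl_{2n})/\Ann(\K^{2n})\simeq M_{2n}(\K)$ is a TGWA of the form $\mathcal{A}(\ga)^+$ (the $k=1$ case of $\Fsl_{2n}$), and verifying that the image of $U(\mathfrak{sp}_{2n})$ is a graded subalgebra of the form $\mathcal{A}(\ga')^+$ for a suitable $n\times n$ matrix $\ga'$.

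\medskip

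\noindent\textbf{Main obstacle.}
The delicate step is the block-decomposition/restriction argument: one must verify that projecting $\mathcal{A}(\ga)^+$ onto a direct summand corresponding to a central idempotent of $R_{p|q}^+$ (which picks out the simple $\Fg$-constituent $M\subseteq S$) again produces a TGWA of the form $\mathcal{A}(\ga')^+$ satisfying the hypotheses of Theorem~\ref{thm:sufficient}, and similarly that the restriction from $\Fsl_{2n}$ to $\mathfrak{sp}_{2n}$ preserves this structure. Concretely, the check amounts to showing that, after imposing the idempotent relation in $R$, the resulting datum $(R',\si',t')$ is still $\mu$-consistent and that its support equals the weight support of $M$, which by the analysis in Section~\ref{sec:support} reduces to a pattern-avoidance condition on vector compositions of the columns of $\ga'$. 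Once this is established, the graded isomorphism $U(\Fg)/J\cong\mathcal{A}(\ga')^+$ follows automatically since both sides inherit compatible $\Z^m$-gradings from the weight decomposition of $M$.
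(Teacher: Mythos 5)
There is a genuine gap, in two places. First, your case analysis does not cover all the modules the theorem claims. For $\mathfrak{so}_{2n}$ and $\mathfrak{so}_{2n+1}$ the finite-dimensional completely pointed simple modules include not only the (half-)spinor representations but also the standard (vector) representation, and the latter is not reachable from the Clifford/oscillator triangle of Theorem \ref{thmClifford} at all: $\pi$ targets the spinor module, so no projection onto a central idempotent of $R^+_{p|q}$ will ever isolate $\End(\K^{2n})$ or $\End(\K^{2n+1})$. The paper handles exactly these leftover cases by a completely different mechanism: starting from the known fact that $U(\Fg)/J\simeq\End(M)$ is a TGWA with possibly non-commuting $\si_i$, it observes that each $\si_i$ must permute the weight projectors $E_\beta$, $\beta\in\Gamma$, and then exhibits explicit commuting permutations of $\Gamma$ (e.g.\ for $\mathfrak{sp}_{2n}$, $\Gamma=\{\pm\ep_i\}$, $\si_1=\dots=\si_{n-1}=\ka=(\ep_1,\dots,\ep_n)(-\ep_n,\dots,-\ep_1)$ and $\si_n=\tau=(\ep_1,-\ep_1)\cdots(\ep_n,-\ep_n)$; for $\mathfrak{so}_{2n}$ one takes $\si_n=\ka\tau$; for $\mathfrak{so}_{2n+1}$ an $(2n+1)$-cycle on $\{\pm\ep_i,0\}$). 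Nothing in your proposal produces these data.

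Second, your route for the natural representation of $\mathfrak{sp}_{2n}$ through $\mathfrak{sp}_{2n}\subset\Fsl_{2n}$ does not go through as stated. The gradings live on different lattices (rank $n$ versus rank $2n-1$), the Chevalley generator for the long simple root $2\ep_n$ of $\mathfrak{sp}_{2n}$ is not a simple root vector of $\Fsl_{2n}$, and since the natural module is $\mathfrak{sp}_{2n}$-irreducible the image of $U(\mathfrak{sp}_{2n})$ is all of $M_{2n}(\K)$ — so "the image is a graded subalgebra of the form $\mathcal{A}(\ga')^+$" is not a restriction statement but precisely the assertion to be proved, namely that commuting $\si_i$ exist for the $\mathfrak{sp}_{2n}$ weight grading. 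Your remaining cases are essentially sound and match the paper: the $\mathfrak{so}_{2n+1}$ spinor follows from Corollary \ref{cor:oddcases}, and the $\Fsl_n$ fundamentals are obtained, as in the paper, by restricting the $\mathfrak{so}_{2n+1}$ spinor along the Dynkin-diagram embedding and passing to the direct summand $\End(\Lambda^k\K^n)$ of the semisimple subalgebra generated by $X_1,\dots,X_{n-1},Y_1,\dots,Y_{n-1}$ (the point being that $\si_i$ preserves $\mathcal B\cap R$ because $\mathcal B$ is a direct summand). But as written the proposal proves a strictly weaker statement than the theorem.
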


\begin{proof}
The problem is to show that we can choose $\sigma_i$ so that the group $G$ generated by $\sigma_i$ is abelian.

If $\mathfrak g=\mathfrak{so}_{2n}$ or $\mathfrak{so}_{2n+1}$ and $M$ is a spinor representation, then
$U(\mathfrak g)/J$ is isomorphic to a subalgebra in the Clifford algebra with abelian
$G$ as follows from Corollary \ref{cor:oddcases}. 

Let $\mathfrak g=\mathfrak{sl}_n$. Consider the embedding
$\mathfrak{sl}_n\subset \mathfrak{so}_{2n+1}$ induced by the embedding of the corresponding Dynkin
diagrams. The restriction of the spinor representation to $\mathfrak{sl}_n$
contains all fundamental representations. Let $\ga$ be the rightmost matrix in \eqref{eq:matrices} and consider the subalgebra in
$\mathcal C\subset \mathcal{A}(\ga)^+$ generated by $X_1,\dots,X_{n-1},Y_1,\dots,Y_{n-1}$. Let
$I=\operatorname{Ann}_\mathcal C M$ and $\mathcal B=\mathcal C/I\simeq \operatorname{End}(M)$. Then $\mathcal B$ is a direct summand in the semisimple
algebra $\mathcal C$. Hence $\sigma_i$ for $i=1,\dots,n-1$ preserve $\mathcal B\cap R$ and the
statement follows. 

Let $\Gamma$ denote the set of weights of $M$.
Note that $\sigma_i$ must permute projectors $E_\beta$, hence it is
defined by a permuation of $\Gamma$.

Let $M$ be the standard representation of $\mathfrak{sp}_{2n}$. Then
$\Gamma=\{\pm\varepsilon_i\}$. Let
$\sigma_1=\sigma_2=\dots=\sigma_{n-1}$ be defined by the permutation
$\kappa=(\varepsilon_1,\dots,\varepsilon_n)(-\varepsilon_n,\dots,-\varepsilon_1)$
and $\sigma_n$ be defined by the permutation $\tau=(\varepsilon_1,-\varepsilon_1)\cdots(\varepsilon_n,-\varepsilon_n)$.

If $\mathfrak g=\mathfrak{so}_{2n}$ and $M$ is the standard representation, then we choose $\sigma_1=\dots=\sigma_{n-1}$
as in the previous case and  let $\sigma_n$ be given by the permutation
$\kappa\tau$.

Finally, if  $\mathfrak g=\mathfrak{so}_{2n+1}$ and $M$ is the standard
representation. Then $\Gamma= \{\pm\varepsilon_i,0\}$ and we define
$\sigma_1=\dots=\sigma_n$ by the permutation $(\varepsilon_1,\dots,\varepsilon_n,0,-\varepsilon_n,\dots,-\varepsilon_1)$.
\end{proof}

\section{Open problems}

\begin{Problem}
For a simple Lie algebra $\Fg$, list all finite-dimensional irreducible $\Fg$-modules $M$ for which there exists a graded isomorphism between $U(\Fg)/\Ann_{U(\Fg)}M$ and a TGW algebra (equivalently, for which there is a choice of commuting $\sigma_i$). We believe none of the non-fundamental representations of $\mathfrak{sl}_n$ for $n>2$ are in this list. The remaining cases to consider are the $27$-dimensional representation of $E_6$ and $56$-dimensional representation of $E_7$.
\end{Problem}

\begin{Problem}\label{prb:consistency}
Find necessary and sufficient conditions for a not necessarily regular TGW algebra $\TGWA{\mu}{R}{\si}{t}$ to be consistent, generalizing the main result of \cite{FutHar2012b}.
\end{Problem}

\bibliographystyle{siam}

\end{document}